\theoremstyle{definition}
\newtheorem{theorem}{Theorem}[section]
\newtheorem{proposition}[theorem]{Proposition}
\def\ba {\begin{eqnarray}}
\def\ea {\end{eqnarray}}
\theoremstyle{definition}
\newtheorem{remark}[theorem]{Remark}
\newtheorem{lemma}[theorem]{Lemma}
\numberwithin{equation}{section}
\def\d{\mathop{}\!\mathrm{d}}
\begin{document}

\captionsetup[figure]{labelfont={bf},labelformat={default},labelsep=period,name={Figure}}

\captionsetup[figure]{labelfont={bf},name={Fig.},labelsep=period}
\begin{frontmatter}
\title {Dynamical analysis in a nonlocal delayed reaction-diffusion \\
tumor model with therapy}

\author[U]{Dandan Hu}
\ead{dandanh@mun.ca}
\author[U]{Yuan Yuan\corref{cspd}}
\cortext[cspd]{Corresponding author}
\ead{yyuan@mun.ca}
\address[U]{
Department of Mathematics and Statistics, Memorial University of Newfoundland, St. John’s, NL A1C 5S7, Canada }

\begin{abstract}

In this work, we investigate the dynamical properties of a reaction-diffusion system arising from
tumor-therapy modelling that features both nonlinear interactions and nonlocal delay.
By applying the Lyapunov-Schmidt
reduction, we establish the existence of a nontrivial steady-state solution bifurcating
from the trivial solution. In particular, we derive an approximate expression for a
spatially nonhomogeneous steady-state solution. 
Then, we provide a detailed spectral characterization of the linearized operator and explicit stability criteria and identify the delay-dependent Hopf bifurcation regimes.
To illustrate the theoretical results, we include a concrete example that verifies
the claims in our theorems and numerically demonstrates how changes in treatment parameters alter
stability and bifurcation behaviour.

\end{abstract}

\begin{keyword}
Reaction-diffusion; Nonlocal delay; Lyapunov-Schmidt reduction;  
Stability;  Hopf bifurcation
 \sep Tumor therapy  

\vspace{0.1cm}
\MSC[2020] Primary 34K18, 35B32, 35B35; Secondary 35K57, 35Q92
\end{keyword}

\end{frontmatter}


\section{Introduction}\label{sec:intro}

Spatial heterogeneity   models based on partial differential equations (PDEs) are widely used to describe the proliferation and invasive spread of tumor cell populations \cite{kr2008,kr2000,rr2010}. Incorporating spatial heterogeneity, nonlocal dispersal, and time delays enables these models to capture essential features of the tumor microenvironment that are absent in spatially homogeneous frameworks. In particular, reaction-diffusion equations with delays and nonlocal terms have become standard tools for analyzing bifurcation, pattern formation, and oscillatory dynamics in biological systems. They have proven effective in modeling population dynamics under realistic conditions \cite{yy2011,css2016,gsj2018,syl2019,sjp2019,wh2020,lyq2023}.

At the cellular level, tissue maintenance relies on regulated cycles of cell division and adhesion. Genetic mutations or disruptions of these processes can generate tumor cells that divide uncontrollably \cite{No2022}. Moreover, reduced expression of adhesion molecules such as E-cadherin promotes detachment from the primary tumor, entry into lymphatic or vascular systems, and subsequent colonization of distant sites \cite{sa2009,lm2021}. These biological mechanisms drive invasion and metastasis and should be represented in models that aim to inform therapeutic strategies. Clinical treatments include surgery, radiotherapy, chemotherapy, and immunotherapy. To quantify the impact of therapy on tumor growth dynamics while maintaining population conservation principles \cite{siam2021,murry2007}, a general framework expresses tumor cell evolution as:
$$\text{Rate of tumor cell change}=\text{Diffusion}+\text{Net proliferation}  - \text{Therapy-induced death}.$$ 
Despite extensive modeling efforts \cite{sk2003,ymk2021,skr2002,bf2012,bas2014,mc2019,kcy2024}, significant gaps remain between theory and clinical application. 
In particular, several tumor growth models neglect maturation delays associated with the cell cycle, despite their critical role in tumor development. Moreover, nonlocal dispersal—an essential mechanism driving tumor invasion and recurrence—is rarely incorporated or systematically analyzed. These limitations highlight the need for more comprehensive modeling frameworks that more accurately capture the underlying biological and therapeutic complexities.

Motivated by these considerations, we extend the framework of 
\cite{css2016, JMB2009} and propose a nonlocal delayed reaction-diffusion tumor model with spatially heterogeneous therapy. Denoting tumor cell density at location $x$ and time $t$ by $u(x, t)$, 
we consider the system
\begin{equation}\label{1.1}
\overbrace{\frac{\partial u(x,t)}{\partial t}}^{\begin{array}{c}
\text { rate of change } \\
\text { of tumor cell } \\
\text { concentration }
\end{array}}=\overbrace{d \Delta u(x,t)}^{\begin{array}{c}
\text { net diffusion } \\
\text { of tumor cells }
\end{array}}+\overbrace{   F\left(u(x, t), \int_{\Omega} \mathcal{S}(x, y)  u(y, t-\tau)  \d y\right) u(x, t)}^{\begin{array}{c}
\text { net proliferation of tumor cells }
\end{array}}-\overbrace{ R(x,t)u(x,t),}^{\begin{array}{c}
\text { death due } \\
\text { to therapy }
\end{array}}
\end{equation}
where  $d>0$ is the diffusion rate of tumor cells, $ \Delta$ denotes the Laplacian operator on $\mathbb{R}^n  (1 \leq n \leq 3)$ modelling spatial dispersal, $\tau \geq 0$ represents the maturation delay: the time required for a newborn tumor cell to become mitotically active, the kernel $\mathcal{S}(x, y)$ encodes the probability that a tumor cell born at $y$ survives the delay and relocates to $x$.
The function $F( \cdot , \cdot )$ governs proliferation 
and admits various nonlinear forms from the literature \cite{wh2020, 
Britton1990}. For instance,
$$
F( \cdot , \cdot )=1+a u(x, t)-(1+a) \int_{\Omega} \mathcal{S}(x, y) u(y, t-\tau) \d y,
$$
or 
$$
F( \cdot , \cdot )=1+a u(x, t)-b u(x, t)^2-(1+a-b) \int_{\Omega} \mathcal{S}(x, y) u(y, t-\tau) \d y,
$$
with $b>0$ and $1+a-b>0$.
 The term 
$R(x,t)$ models therapy-induced cell death, which may be spatially heterogeneous, constant, periodic, or absent. Appropriate initial and boundary conditions are imposed on a smooth, bounded domain 
$\Omega\in \mathbb{R}^n (1 \leq n \leq 3)$ with smooth boundary $\partial \Omega$ and initial value 
$$u(x, \theta )=u_0(x, \theta ),  \, x \in \bar{\Omega}, \, \theta \in[-\tau, 0].$$

To analyze the model \eqref{1.1} mathematically, we give the following assumptions:
\begin{enumerate}
\item[\textbf{(S$_1$)}]   $F(u,v) \in C^m,
m \geq 3.$
\item[\textbf{(S$_2$)}]  The kernel function $\mathcal{S}(x, y)$ is a continuous and nonnegative function on $\bar{\Omega} \times \bar{\Omega}$, and the linear Fredholm integral operator
$$
L(\psi(x)):=\int_{\Omega} \mathcal{S}(x, y) \psi(y) \d y
$$
is strictly positive on $C_{+}(\bar{\Omega})$ in the sense that $L\left(C_{+}(\bar{\Omega}) \backslash\{0\}\right) \subset C_{+}(\bar{\Omega}) \backslash\{0\}$, see \cite{zxq2009}.
\end{enumerate}
We adopt the following notations:
$\mathbb{R}^{+}$ and $\mathbb{C}$ denote the positive real numbers and complex numbers; $\mathbb{N}=\{1,2, \cdots\}$; $ \mathbb{N}_0=$ $\{0\} \cup \mathbb{N}$; 
Dom$(L)$, Ker$(L)$ and Range$(L)$ denote the domain, kernel and range of a linear operator $L$; the complexification of a space $Z$ is $Z_{\mathbb{C}}:=Z \oplus \mathrm{i} Z=\left\{x_1+\mathrm{i} x_2 \mid x_1, x_2 \in Z\right\}$; 
the inner product on the Hilbert space $X_{\mathbb{C}}$ is 
$\langle u, v\rangle=\int_{\Omega} \bar{u}(x) v(x) d x$; $\mathcal{C}^k\left([-\tau, 0], X_{\mathbb{C}}\right)$ is the Banach space of $k$-times continuously differentiable mappings from $[-\tau, 0]$ to $\mathbb{X}_{\mathbb{C}}$;
$H^m(\Omega)(m \geq 0)$ is the Sobolev space of $L^2$-functions with derivatives up to order $m$ in $L^2(\Omega)$;
$H_0^1(\Omega)=\left\{u \in H^1(\Omega) \mid \mathcal{B} u(x)=0 \text { for all } x \in \partial \Omega\right\}$; and 
$\mathbb{X}= H^2(\Omega)  \cap H_0^1(\Omega) $ and $ \mathbb{Y}= L^2(\Omega)$. 

 This work aims to explore the dynamical behavior of the nonlinear system \eqref{1.1}, with a focus on the stability and bifurcation of the nonhomogeneous steady states. 
The remainder of the paper is organized as follows. Section 2 proves the existence of the bifurcating positive steady-state solution and presents an approximate form of such a solution.
Section 3 is devoted to the local stability analysis of the steady-state solutions, and a Hopf bifurcation study is provided in Section 4 around the positive steady states. Section 5 contains an example and numerical simulations that corroborate the theory and discuss implications for treatment.

\section{Existence of Steady-state Solutions}

We first examine the existence of biologically feasible steady states in system \eqref{1.1}. To simplify the analysis, we specify the therapy term as
$$R(x,t)= \begin{cases} 
0, & \text{ without  therapy, } \\  
 \beta q(u) + r(x),  &  \text{ with therapy. }
\end{cases}$$
where $\beta \in \mathbb{R}^+$ represents the treatment rate,
$q(u) \in C^{2}$ satisfies $q(0)=1$ and $q^\prime(u)<0$,
 and $r(x)$ is a smooth function describing the 
spatial variation in therapeutic efficacy. Thus, treatment efficacy decreases with tumor cell density and depends on location.

For $R(x,t)=0$ (i.e. no therapy), the analysis is similar to that in \cite{css2016}.  In this work, we focus on the dynamics with therapy. 
If there exists a steady-state solution
$u(x)$ in \eqref{1.1},
 it must satisfy
 \begin{equation}\label{1.2}
     \left\{\begin{array}{lc}
d \Delta u(x)+   F\left( u(x), \int_{\Omega} \mathcal{S}(\cdot, y) u(y) \d y\right) u(x) - ( \beta q(u(x)) + r(x)) u(x) =0, & x \in \Omega, \\
\mathcal{B}u(x)=0, & x \in \partial \Omega.
\end{array}\right.
 \end{equation}
where 
$\mathcal{B}$ denotes either the homogeneous Neumann condition (isolated habitat) or Dirichlet condition (hostile environment). 

Define the nonlinear operator $\Gamma: \mathbb{X} \times \mathbb{R}^{+} \rightarrow \mathbb{Y}$  by
$$
\Gamma(u,\beta)= d\Delta u +   F\left(u, \int_{\Omega} \mathcal{S}(\cdot, y) u(y) \d y\right) u -  ( \beta q(u) + r(x)) u,
$$
for $u \in \mathbb{X}$ and $\beta \in \mathbb{R}^{+} $. 
Then we try to solve $\Gamma(u,\beta)=0.$
First of all, it is easy to see that, 
there always exists a trivial solution $u(x)=0$ in $\Gamma(u,\beta)=0$ ,  i.e.,  $\Gamma(0,\beta)=0$ for any $\beta \in \mathbb{R}^{+}$. 
To show the existence of nontrivial solutions 
near the trivial solution $(0, \beta)$, we take $\beta$ as a bifurcation parameter, 
then
by applying the implicit function theorem, that is, to express $u(x)$ as an implicit function of $\beta$ in $\Gamma(u,\beta)=0$. We calculate the derivative of $\Gamma$, with respect to $u$, evaluated at $(0, \beta)$, which is given by
$$
 \Gamma_{u}(0,\beta) (\phi)=   (d \Delta + F(0,0)     - r(x)  - \beta ) \phi:= \mathcal{L}^{\beta} \phi, \quad \phi \in \mathbb{X}.
$$
From Theorem 2.3.20 in \cite{y2011}, we know that  the eigenvalue problem
\begin{equation}\label{1-b}
    \begin{cases}
 d \Delta u(x) + F(0,0)  u(x) - r(x)u(x)= \beta  u(x), & x\in \Omega, \\
\mathcal{B}u(x) = 0, & x\in \partial\Omega,
\end{cases}
\end{equation}
has a principal eigenvalue $\beta_*$  
with positive eigenfunction \(\varphi_*\).
This means that there exists $\beta_*$ such that  
\begin{equation}\label{beta*}
 \Gamma_{u}(0,\beta_*)\, \varphi_* = \mathcal{L}^{\beta_*} \, \varphi_*= 0, 
\end{equation}
which indicates that  
$\mathcal{L}^{\beta_*}: \mathbb{X}  \rightarrow \mathbb{Y} $ is non-invertible, 
preventing direct application of the implicit function theorem. Instead, we apply Lyapunov-Schmidt reduction by decomposing
\begin{equation}\label{xy}
    \mathbb{X}=\operatorname{Ker} (\,\mathcal{L}^{\beta_{*}} ) \oplus \mathbb{X}_1,
\quad \mathbb{Y}=\operatorname{Ker} (\,\mathcal{L}^{\beta_{*}} ) \oplus \mathbb{Y}_1,
\end{equation}
with $\operatorname{Ker} (\,\mathcal{L}^{\beta_{*}})=\operatorname{span}\{\varphi_*\} \subseteq \mathbb{X}  \subseteq \mathbb{Y}$.
 and $ \mathbb{X}_1=\left\{y \in \mathbb{X} \mid\left\langle\varphi_*, y\right\rangle=0\right\} $  
 and $\mathbb{Y}_1=\left\{y \in \mathbb{Y}: \mid\left\langle\varphi_*, y\right\rangle=0\right\}. 
$ 
Further, we summarize some properties of the operator $\mathcal{L}^{\beta_{*}}$ in the following.
\begin{proposition}
 (i) $\mathcal{L}^{\beta_*} $ is non-invertible from $ \mathbb{X}$  to $\mathbb{Y};$ 
 \\
 (ii) $\mathcal{L}^{\beta_{*}}$  is a symmetric bounded linear operator from  $\mathbb{X}$ to $\mathbb{Y};$
\\
(iii)  $(\operatorname{ Range }(\mathcal{L}^{\beta_{*}}))^{\perp}=\operatorname{Ker} (\mathcal{L}^{\beta_{*}})$ and $\mathcal{L}^{\beta_*} $ is invertible from $  \mathbb{X}_1 $  to $\mathbb{Y}_1.$
\\
(iv)  $\mathcal{L}^{\beta_{*}} $ is a Fredholm operator with index zero from  $\mathbb{X} \rightarrow \mathbb{Y};$
\end{proposition}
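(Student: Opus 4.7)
The plan is to verify the four items by combining the variational structure of $\mathcal{L}^{\beta_*}$ with standard elliptic and spectral theory on the bounded smooth domain $\Omega$. Throughout I will use that $\varphi_*$ is the principal eigenfunction of the self-adjoint operator in \eqref{1-b}, so $\varphi_* > 0$ on $\Omega$, the principal eigenvalue $\beta_*$ is algebraically simple (by Krein-Rutman or the strong maximum principle), and hence $\operatorname{Ker}(\mathcal{L}^{\beta_*}) = \operatorname{span}\{\varphi_*\}$ is one-dimensional.

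Part (i) is immediate from \eqref{beta*}: $\mathcal{L}^{\beta_*}\varphi_* = 0$ with $\varphi_* \neq 0$, so the kernel is nontrivial and the operator cannot be invertible. For part (ii), linearity is built into the definition; boundedness $\mathbb{X} \to \mathbb{Y}$ follows because $d\Delta$ maps $H^2(\Omega) \cap H_0^1(\Omega)$ continuously into $L^2(\Omega)$, while multiplication by $F(0,0) - \beta_* - r(x)$ is continuous on $L^2(\Omega)$ since $r$ is smooth on $\bar\Omega$ and therefore bounded. Symmetry reduces to two integrations by parts: $\langle d\Delta u, v\rangle = \langle u, d\Delta v\rangle$ thanks to the Dirichlet or Neumann condition encoded in $\mathcal{B}$, and the multiplicative part is trivially symmetric since the coefficient is real-valued.

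For part (iii), the main substance lies in showing that $\operatorname{Range}(\mathcal{L}^{\beta_*})$ is closed in $\mathbb{Y}$. I will view $\mathcal{L}^{\beta_*}$ as an unbounded self-adjoint operator on $L^2(\Omega)$ with domain $\mathbb{X}$; since the inclusion $\mathbb{X} \hookrightarrow \mathbb{Y}$ is compact (Rellich-Kondrachov), this operator has compact resolvent and hence discrete spectrum with finite-dimensional eigenspaces. In particular $0$ is an isolated eigenvalue of finite multiplicity, so its range is closed. Combined with the symmetry established in (ii), the classical closed-range identity yields $(\operatorname{Range}(\mathcal{L}^{\beta_*}))^\perp = \operatorname{Ker}(\mathcal{L}^{\beta_*})$, equivalently $\operatorname{Range}(\mathcal{L}^{\beta_*}) = \mathbb{Y}_1$. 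With the decomposition \eqref{xy}, the restriction $\mathcal{L}^{\beta_*}|_{\mathbb{X}_1} : \mathbb{X}_1 \to \mathbb{Y}_1$ is injective, because its full kernel sits in $\operatorname{span}\{\varphi_*\}$ which meets $\mathbb{X}_1$ only at $0$, and surjective since its image is all of $\mathbb{Y}_1$; bounded invertibility of the inverse then follows from the bounded inverse theorem.

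Part (iv) is a one-line corollary: $\dim\operatorname{Ker}(\mathcal{L}^{\beta_*}) = 1$ by simplicity of $\beta_*$, and $\operatorname{codim}\operatorname{Range}(\mathcal{L}^{\beta_*}) = \dim(\mathbb{Y}/\mathbb{Y}_1) = 1$, so the Fredholm index is $1-1 = 0$. I expect the main obstacle to be the justification of closedness of the range, or equivalently the Fredholm property, which requires treating $\mathcal{L}^{\beta_*}$ as an unbounded self-adjoint operator with compact resolvent rather than as a mere bounded map between Hilbert spaces; once this viewpoint is adopted the remaining steps reduce to textbook citations.
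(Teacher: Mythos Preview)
Your proof is correct and follows the same overall skeleton as the paper's: non-trivial kernel for (i), symmetry of the Laplacian plus bounded real multiplier for (ii), the identity $(\operatorname{Range}(\mathcal{L}^{\beta_*}))^\perp = \operatorname{Ker}(\mathcal{L}^{\beta_*})$ combined with the bounded inverse theorem for (iii), and a dimension count for (iv).

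The one substantive difference is in part (iii). The paper argues the perpendicularity identity directly from symmetry via the adjoint, and then passes to the decomposition $\mathbb{Y} = \operatorname{Range}(\mathcal{L}^{\beta_*}) \oplus (\operatorname{Range}(\mathcal{L}^{\beta_*}))^\perp$ by noting that $(\operatorname{Range}(\mathcal{L}^{\beta_*}))^\perp$ is closed. That step as written is incomplete: the orthogonal complement of any set is closed, but the orthogonal decomposition requires closedness of $\operatorname{Range}(\mathcal{L}^{\beta_*})$ itself, which the paper does not verify. Your route---viewing $\mathcal{L}^{\beta_*}$ as an unbounded self-adjoint operator on $L^2(\Omega)$ with compact resolvent (Rellich--Kondrachov), so that $0$ is an isolated eigenvalue of finite multiplicity and the range is automatically closed---supplies exactly this missing ingredient. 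Your argument is therefore the more complete one at this point; the cost is invoking slightly heavier spectral machinery, whereas the paper's intended argument is purely algebraic once closedness of the range is taken for granted.
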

\begin{proof}
    (i) From \eqref{beta*}, it is easy to see that $\mathcal{L}^{\beta_*} $ is non-invertible from $ \mathbb{X}$  to $\mathbb{Y}$. 
    
    (ii) In Banach space, the continuity and boundedness of the function are equivalent, and the Laplacian operator is a symmetric operator, hence, $\mathcal{L}^{\beta_{*}}$  is a symmetric bounded linear operator from  $\mathbb{X}$ to $\mathbb{Y}$.
     
     (iii)  Since $\mathcal{L}^{\beta_{*}}$ is a symmetric operator, i.e., $  \langle \mathcal{L}^{\beta_{*}} \, x, y \rangle=\langle x, \mathcal{L}^{\beta_{*}} \,y \rangle,$ thus \((\mathcal{L}^{\beta_{*}})^*=\mathcal{L}^{\beta_{*}}\).
      For \(y\in \operatorname{Range}(\mathcal{L}^{\beta_{*}})^\perp\) and any \(x\in \mathbb{X}\) we have
  $  \langle \mathcal{L}^{\beta_{*}} x, y \rangle = 0.$ Since  $\mathcal{L}^{\beta_{*}}$ is a bounded linear operator, there exists a unique bounded linear operator $(\mathcal{L}^{\beta_{*}})^*$, which is the  adjoint operator of $\mathcal{L}^{\beta_{*}}$, such that 
  $
  0=\langle \mathcal{L}^{\beta_{*}} x, y \rangle = \langle x, (\mathcal{L}^{\beta_{*}})^* y \rangle, 
  $  
  implying that \(\langle x, (\mathcal{L}^{\beta_{*}})^* y \rangle = 0\) for all \(x\in \mathbb{X}\) and hence 
  \((\mathcal{L}^{\beta_{*}})^*y=0\), i.e., \(y \in \operatorname{ Ker }(\mathcal{L}^{\beta_{*}})^*=\operatorname{ Ker }(\mathcal{L}^{\beta_{*}})\). 
Vice versa, for \(y\in \operatorname{ Ker }(\mathcal{L}^{\beta_{*}})^*=\operatorname{ Ker }(\mathcal{L}^{\beta_{*}})\), i.e.,  \((\mathcal{L}^{\beta_{*}})^*y=0\), then for all \(x\in \mathbb{X}\), we have  $\langle \mathcal{L}^{\beta_{*}} x, y \rangle = \langle x, (\mathcal{L}^{\beta_{*}})^* y \rangle = \langle x, 0 \rangle = 0,$
  that is \(y\) is orthogonal to  \(\mathcal{L}^{\beta_{*}}x\) for all \(x\in \mathbb{X}\), i.e., \(y\in(\operatorname{Range}(\mathcal{L}^{\beta_{*}}))^\perp\). 
   Therefore we have the property 
  \begin{equation}\label{xyy}
      \operatorname{ Range }(\mathcal{L}^{\beta_{*}})^{\perp}=\operatorname{Ker} (\mathcal{L}^{\beta_{*}}).
  \end{equation}
  In addition,  $(\operatorname{ Range }(\mathcal{L}^{\beta_{*}}))^{\perp}$ is closed in $\mathbb{Y}$,  hence \begin{equation}\label{2.44}     \mathbb{Y}= \operatorname{ Range }(\mathcal{L}^{\beta_{*}}) \oplus (\operatorname{ Range }(\mathcal{L}^{\beta_{*}}))^{\perp}. \end{equation}
Consequently, \eqref{xyy} and \eqref{2.44} yield
 $$\mathbb{Y}= \operatorname{ Ker }(\mathcal{L}^{\beta_{*}}) \oplus \operatorname{ Range }( \mathcal{L}^{\beta_{*}}),$$
 and  
\begin{equation*}\label{RY1}
    \mathbb{Y}_{1}= \operatorname{ Range }( \mathcal{L}^{\beta_{*}})
\end{equation*}  from \eqref{xy}.
It is now clear that the operator $\mathcal{L}^{\beta_{*}} \mid \mathbb{X}_1: \mathbb{X}_1 \rightarrow \mathbb{Y}_1$ is bijective, and has a bounded inverse from the bounded inverse theorem, which in turn shows that  $\mathcal{L}^{\beta_{*}} \mid \mathbb{X}_1: \mathbb{X}_1 \rightarrow \mathbb{Y}_1$ is invertible.

(iv)  Since
the bounded linear operator $\mathcal{L}^{\beta_{*}}$  has finite-dimensional $\operatorname{Ker} (\,\mathcal{L}^{\beta_{*}})$, and  finite-dimensional $\operatorname{Coker} (\,\mathcal{L}^{\beta_{*}})$, i.e., $\text{codim} \operatorname{Range}(\,\mathcal{L}^{\beta_{*}})$ $=$  $\dim (\operatorname{Range}(\,\mathcal{L}^{\beta_{*}}))^{\perp}$ $=$  $\dim \operatorname{Ker} (\,\mathcal{L}^{\beta_{*}}) $ from \eqref{xyy}, $\mathcal{L}^{\beta_{*}} $ is a Fredholm operator with index zero from  $\mathbb{X} \rightarrow \mathbb{Y}.$

\end{proof}

From the proposition 2.1, we know that the operator $\mathcal{L}^{\beta_{*}}: \mathbb{X}  \rightarrow \mathbb{Y} $  is noninvertible on the entire space $\mathbb{X}$, but it is invertible on the subspace $\mathbb{X}_1$.
Now,  we can use Lyapunov-Schmidt reduction method to find solutions of \eqref{1.2}. 
For any $u \in \mathbb{X}$,  we decompose $u$ as $u=\xi+\eta$ with $\xi \in \operatorname{Ker} (\,\mathcal{L}^{\beta_{*}} )$ and $\eta \in \mathbb{X}_1$. 
From $\mathbb{Y}=\operatorname{Ker} (\,\mathcal{L}^{\beta_{*}} ) \oplus \mathbb{Y}_1,$ there exist projection operator $P$ such that $P$ from $\mathbb{Y}$ onto $\mathbb{Y}_1$ and $I-P$ from $\mathbb{Y}$ onto $\operatorname{Ker} (\,\mathcal{L}^{\beta_{*}} )$, that is, for any $ y \in \mathbb{Y},$  it can be decomposed as $y= P(y) + (I-P)(y)$ with $P(y) \in \mathbb{Y}_1  $ and $ (I-P)(y) \in \operatorname{Ker} (\,\mathcal{L}^{\beta_{*}} )$. Consequently, $\Gamma(u, \beta)=0$ is equivalent to 
\begin{equation}\label{i-p}
    P \, \Gamma(\xi+\eta, \beta)=0 
 \text{  and }  (I-P) \Gamma(\xi+\eta, \beta)=0.
\end{equation}
Obviously, $P \, \Gamma\left(0, \beta_*\right)=0$ and $P \, \Gamma_{\eta}\left(0, \beta_*\right)=\mathcal{L}^{\beta_{*}}$. Since $\mathcal{L}^{\beta_{*}} \mid \mathbb{X}_1: \mathbb{X}_1 \rightarrow \mathbb{Y}_1$ is invertible,
by applying the implicit function theorem to $P \, \Gamma(\xi+\eta, \beta)=0$, i.e., 
there exists a 
continuously  
differentiable mapping $g: U_0 \rightarrow \mathbb{X}_1$,
where $U_0$ is a neighborhood   of $(0, \beta_{*})$
in $\operatorname{Ker} (\,\mathcal{L}^{\beta_{*}} ) \times \mathbb{R}^{+}$,
 such that  
$g(0, \beta)=0$ as $\Gamma(0,\beta)=0$ for any $\beta \in \mathbb{R}^{+}$,  and
\begin{equation}\label{gxi}
   P\Gamma(\xi+g(\xi, \beta), \beta) = 0.
\end{equation} 
 Substituting $\eta=g(\xi, \beta)$ into the second equation of \eqref{i-p} yields
\begin{equation}\label{L}
    \mathcal{Q}(\xi, \beta) \equiv(I-P) \Gamma (\xi+ g(\xi, \beta), \beta)=0.
\end{equation}
Then, we replace the problem of solving system \eqref{1.2} to the problem of finding the zeros of the mapping $ \mathcal{Q}$ from $U_0$ onto 1-Dimensional space $\operatorname{Ker} (\,\mathcal{L}^{\beta_{*}})$.

For any $\xi \in \operatorname{Ker} (\,\mathcal{L}^{\beta_{*}})$, there exists $\nu \in \mathbb{R}$ such that $\xi= \nu \varphi_*(x)$. By calculating the inner product of \eqref{L} with $\varphi_*(x)$, 
we finally reduces the infinite-dimensional problem to a scalar bifurcation equation
\begin{equation*}
    f(\nu, \beta)=\int_{\Omega} \varphi_*(x) \Gamma \left(\nu \varphi_*(x)+ g\left(\nu \varphi_*(x), \beta \right), \beta\right) \mathrm{d} x=0,
\end{equation*}
where $u(x)=\nu \varphi_*(x) + g(\nu \varphi_*(x),\beta)$ and 
$g$ is obtained via the implicit function theorem.

Obviously, $f(0, \beta)=0$. To find the nonzero solution for $\nu$, expanding 
$f(\nu, \beta)$ with respect to $\nu$, 
we have
\begin{equation}\label{f(v,b)}
    f(\nu, \beta)=\nu \bar{f}(\nu,\beta) \, \text{ with }\bar{f}(\nu,\beta)= \varrho(\beta)+ \kappa(\beta) \nu+ o (\nu ),
\end{equation}
where $\varrho(\beta)=\int_{\Omega} \varphi_*\left(d \Delta+ F(0,0)  -\beta - r(x) \right) \varphi_* \d x $ and
\begin{equation}\label{kappa}
    \kappa(\beta)=  F_{1}^{\prime}(0,0) \int_{\Omega} \varphi_*^3(x) \d x +   F_{2}^{\prime}(0,0) \int_{\Omega} \int_{\Omega} \mathcal{S}(x, y) \varphi_*^2(x) \varphi_*(y) \d x \d y -\beta q^{\prime}(0) \int_{\Omega} \varphi_*^3(x) \d x,
\end{equation}
  $F_{1}^{\prime}(0,0)$ and  $F_{2}^{\prime}(0,0)$ denote the partial derivatives of $ F\left(u(x), \int_{\Omega} \mathcal{S}(x, y)  u(y)  \d y\right)$ with respect to its first   and  second component, respectively, evaluated at $(0,0)$. 
From \eqref{beta*}, $\bar{f}(0,\beta_*)=\varrho(\beta_*) = 0$ and  
\begin{equation*}
    \bar{f}_{\nu}(0,\beta_*)=  \kappa(\beta_*)= (\theta_{1}+\theta_2) - \beta_* q^{\prime}(0) \theta_3,
\end{equation*}
where
\begin{equation}\label{c1c2}
\begin{aligned}
    &\theta_{1} =F_{1}^{\prime}(0,0) \int_{\Omega} \varphi_*^3(x) \d x, \,  \theta_2= F_{2}^{\prime}(0,0) \int_{\Omega} \int_{\Omega} \mathcal{S}(x, y) \varphi_*^2(x) \varphi_*(y) \d x \d y,  \,
    \theta_3=\int_{\Omega} \varphi_*^3(x) \d x. 
    \end{aligned}
\end{equation}
If $\kappa(\beta_*) \neq 0,$ we apply the implicit function theorem to   $\bar{f}(\nu,\beta)=0$, then
there exists a neighborhood
$\mathcal{N}(\beta_*, \delta )=[\beta_*-\delta,\beta_*+\delta] \subseteq \mathbb{R}^+ $ 
such that there is
 a continuously differentiable mapping $\nu:= \nu_{\beta}$  such that $\nu_{\beta_*}=0$ and  $f\left(\nu_{\beta}, \beta\right) \equiv 0$ for all  $  \beta \in \mathcal{N}(\beta_*, \delta ) $, and hence  $u_{\beta}(x)=\nu_{\beta} \, \varphi_*(x)+  g\left(\nu_{\beta} \, \varphi_*(x), \beta \right)$ is a non-zero steady state  of \eqref{1.2}.

 Although the exact form of $u_{\beta}(x)$ cannot be obtained, we can derive an 
approximate solution under a stronger condition $\kappa(\beta) \neq 0$ in \eqref{kappa}.  In fact, from \eqref{f(v,b)},  by using Taylor expansion with respect to $\beta$ at $\beta=\beta*$,  we have
\begin{equation}\label{vb}
    v_{\beta}=-\frac{\varrho(\beta)}{\kappa(\beta)}+o(\nu )
=\frac{\theta_4}{\kappa(\beta_*)}(\beta-\beta_{*}) +o(\nu)+o(\beta-\beta_{*}),
\end{equation}
where 
\begin{equation}\label{c4}
\begin{aligned}
    &\theta_{4} = \int_{\Omega}\varphi_*^2(x) \d x.
    \end{aligned}
\end{equation}
Differentiating \eqref{gxi} with respect to $\xi$ and $\beta$ at $(0,\beta_*)$, results   
$$g_{\xi}^{\prime}(0,\beta_{*})=g_{\beta }^{\prime}(0,\beta_{*})=g_{\beta \beta}^{\prime \prime}(0,\beta_{*})=0.$$  
Therefore, the function $g(\xi, \beta)$ can be represented by
\begin{equation*}\label{gvb}
    g(\xi,\beta) \approx \frac{1}{2} g_{\xi \xi}^{\prime \prime}(0,\beta_{*}) \xi^2 + g_{\xi \beta}^{\prime \prime}(0,\beta_{*}) \xi (\beta-\beta_{*}) = m(x) \frac{\theta_4 \varphi_*(x)}{\kappa(\beta_*)} (\beta-\beta_{*})^2,
\end{equation*}
where $m(x)=  \frac{1}{2} g_{\xi \xi}^{\prime \prime}(0,\beta_{*})  \frac{\theta_4  \varphi_* (x)}{\kappa (\beta_*)} + g_{\xi \beta}^{\prime \prime}(0,\beta_{*})$, due to $g(0,\beta)=0$ for any $\beta \in \mathbb{R^+}$ and \eqref{vb}. Consequently, the solution $u_{\beta}(x)$ has the form of
\begin{equation}\label{ux}
    u_{\beta}(x) \approx \frac{\theta_4 \varphi_*(x)}{\kappa(\beta_*)} \left( (\beta-\beta_{*})  +  m(x) (\beta-\beta_{*})^2 \right).
\end{equation}

Further we can determine the sign of $u_{\beta}(x)$.
Since $\beta_*$ is the principal eigenvalue of system \eqref{1-b}, the corresponding eigenfunction $\varphi_*(x)$ is positive on $\Omega$. 
It follows that
when $\kappa(\beta_*)<0$ and $\beta<\beta_*$ (or $\kappa(\beta_*)>0$ and $\beta>\beta_*$), the solution
$u_{\beta}(x)$ remains
positive in 
$\mathcal{N}_{-}(\beta_*, \delta )=[\beta_*-\delta,\beta_*)$ (or $\mathcal{N}_{+}(\beta_*, \delta )=(\beta_*, \beta_* + \delta]$).
Hence, we obtain the following result.

\begin{theorem}\label{ubx}
    Suppose that $\kappa(\beta_*) \neq 0$.
    Then there exist 
    $\delta >0$ and a continuously differentiable mapping $\nu_\beta: \mathcal{N}(\beta_*, \delta ) \rightarrow \mathbb{R}$
    such that system \eqref{1.2} admits a steady-state solution  $u_{\beta}(x)$   for all $ \beta \in \mathcal{N}(\beta_*, \delta ).$ 
Moreover,  if $\kappa(\beta) \neq 0$, then  $u_{\beta}(x)$ can be expressed in the form given in \eqref{ux}, and it is positive when $\kappa(\beta_*) <0 $ with $ \beta \in \mathcal{N}_{-}(\beta_*, \delta )$ (or  $ \kappa(\beta_*) >0$ with $ \beta \in \mathcal{N}_{+}(\beta_*, \delta )).$ 
\end{theorem}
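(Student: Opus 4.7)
The plan is to follow the Lyapunov-Schmidt decomposition already prepared in the preceding discussion and then invoke the implicit function theorem twice: first to eliminate the complementary component $\eta$, and second to solve the reduced scalar bifurcation equation for $\nu$ in a neighbourhood of $\beta_{*}$.

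First I would fix the splittings $\mathbb{X} = \operatorname{Ker}(\mathcal{L}^{\beta_{*}}) \oplus \mathbb{X}_1$ and $\mathbb{Y} = \operatorname{Ker}(\mathcal{L}^{\beta_{*}}) \oplus \mathbb{Y}_1$ from \eqref{xy}, together with the associated projection $P: \mathbb{Y} \to \mathbb{Y}_1$. Writing any $u$ near $0$ as $u = \xi + \eta$ with $\xi = \nu \varphi_{*} \in \operatorname{Ker}(\mathcal{L}^{\beta_{*}})$ and $\eta \in \mathbb{X}_1$, the equation $\Gamma(u,\beta)=0$ is equivalent to the pair in \eqref{i-p}. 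Because $P\Gamma(0,\beta_*)=0$ and $P\Gamma_{\eta}(0,\beta_*) = \mathcal{L}^{\beta_*}|_{\mathbb{X}_1}: \mathbb{X}_1 \to \mathbb{Y}_1$ is a bijection with bounded inverse (Proposition 2.1(iii)), the standard implicit function theorem applied in Banach space yields a $\mathcal{C}^{m-1}$ map $g$ satisfying \eqref{gxi} on a neighbourhood of $(0,\beta_*)$, and $g(0,\beta)\equiv 0$ since the trivial solution persists for every $\beta$.

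Substituting $\eta=g(\xi,\beta)$ into the second equation of \eqref{i-p} gives the one-dimensional equation $\mathcal{Q}(\xi,\beta)=0$; pairing it with $\varphi_{*}$ produces the scalar bifurcation function $f(\nu,\beta)$. The factorization $f(\nu,\beta)=\nu\bar{f}(\nu,\beta)$ in \eqref{f(v,b)} isolates the nontrivial branch. Here I would carry out the Taylor expansion of $\bar{f}$ in $\nu$ and verify that its first two coefficients are exactly $\varrho(\beta)$ and $\kappa(\beta)$ as claimed. By \eqref{beta*} we have $\varrho(\beta_*)=0$, hence $\bar{f}(0,\beta_*)=0$, while the hypothesis $\kappa(\beta_*)\neq 0$ gives $\bar{f}_{\nu}(0,\beta_*)\neq 0$. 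A second application of the implicit function theorem to $\bar{f}=0$ delivers $\delta>0$ and a $\mathcal{C}^1$ solution curve $\beta \mapsto \nu_\beta$ on $\mathcal{N}(\beta_*,\delta)$ with $\nu_{\beta_*}=0$, and therefore a steady state $u_\beta(x) = \nu_\beta\varphi_*(x) + g(\nu_\beta\varphi_*,\beta)$ of \eqref{1.2}.

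For the approximate expression \eqref{ux}, I would Taylor-expand $\nu_\beta$ about $\beta=\beta_*$. From $\bar{f}(\nu_\beta,\beta)\equiv 0$ and the implicit differentiation one obtains $\nu_\beta = -\varrho(\beta)/\kappa(\beta) + o(\nu_\beta)$; expanding $\varrho(\beta)$ about $\beta_*$ and using $\varrho(\beta_*)=0$ together with $\varrho'(\beta_*)=-\theta_4$ yields the leading term in \eqref{vb}. The derivatives $g_\xi'(0,\beta_*)$, $g_\beta'(0,\beta_*)$, $g_{\beta\beta}''(0,\beta_*)$ vanish because $g(0,\beta)\equiv 0$ and because differentiating \eqref{gxi} in $\xi$ at $(0,\beta_*)$ gives $\mathcal{L}^{\beta_*} g_\xi'(0,\beta_*)=0$ on $\mathbb{X}_1$, forcing $g_\xi'(0,\beta_*)=0$; retaining only the surviving second-order contributions $g_{\xi\xi}''$ and $g_{\xi\beta}''$ produces the displayed approximation for $g$ and hence \eqref{ux}. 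Finally, since $\varphi_*$ is strictly positive in $\Omega$ (principal eigenfunction of \eqref{1-b}) and $\theta_4>0$, the sign of $u_\beta(x)$ for $\beta$ close to $\beta_*$ is governed by the sign of $(\beta-\beta_*)/\kappa(\beta_*)$, giving positivity in each of the two cases listed.

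The main obstacle I anticipate is bookkeeping rather than conceptual: one must justify that the successive projections and the two applications of the implicit function theorem can be performed in the same neighbourhood with compatible smoothness (this uses assumption (S$_1$) to guarantee enough regularity of $\Gamma$), and one must extract the precise coefficient $\kappa(\beta_*)$ in \eqref{kappa} by carefully identifying which terms of the expansion of $F(u,\int_\Omega\mathcal{S}(\cdot,y)u\,dy)u - \beta q(u)u$ contribute to $\bar{f}_\nu(0,\beta_*)$ after pairing with $\varphi_*$; this is where the nonlocal term and the therapy term $\beta q'(0)$ interact and determine the bifurcation direction.
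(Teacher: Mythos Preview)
Your proposal is correct and follows essentially the same route as the paper: the argument preceding Theorem~\ref{ubx} is precisely the two-step Lyapunov--Schmidt reduction you outline (first solve $P\Gamma=0$ for $g$ via Proposition~2.1(iii), then solve the factored scalar equation $\bar f(\nu,\beta)=0$ via $\kappa(\beta_*)\neq 0$), followed by the Taylor expansions yielding \eqref{vb} and \eqref{ux} and the sign analysis based on the positivity of $\varphi_*$. Your justification of $g_\xi'(0,\beta_*)=0$ is in fact slightly more explicit than the paper's.
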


\begin{remark}
In contrast to earlier studies \cite{css2016,wh2020,lyq2023} that obtained  nonhomogeneous steady states based on particular solutions, the present work establishes a general bifurcation framework and derives an approximate representation of the positive steady state near $\beta_*$.
\end{remark}

\section{Stability Analysis}

In this section, we explore the local stability of the system \eqref{1.1}
at its steady-state solutions.
We first linearize system \eqref{1.1} around a steady state 
$u(x)$. The resulting linearized system is 
\begin{equation}\label{3.2}
    \begin{aligned}
&\left\{\begin{aligned}
 &u_{t}(x, t)=  d \Delta u(x,t) - \left[
 r(x) +  \beta q(u(x)) +  \beta q^{\prime}(u(x))u(x)
 \right] u(x, t) +   A(x)  u(x) u(x, t)
 &  \\
 &   
 \qquad + u(x, t) F\left(u(x), \int_{\Omega} \mathcal{S}(x, y) u(y) \d y\right) +   B(x)  u(x) \int_{\Omega} \mathcal{S}(x, y) u(y, t-\tau) \d y,  \, x \in \Omega, t>0, \\
&\mathcal{B}u(x) = 0, \, x \in \partial \Omega, t>0,
\end{aligned}\right.\\
\end{aligned}
\end{equation}
where $$A(x)=F^{\prime}_{1}\left(u(x), \int_{\Omega} \mathcal{S}(x, y) u(y) \d y\right) , \ \ \text { and } B(x)=F_{2}^{\prime}\left(u(x), \int_{\Omega} \mathcal{S}(x, y) u(y) \d y\right).$$ 
Following the framework in Wu \cite{wu1996}, the semigroup generated by the solution  of \eqref{3.2} admits the infinitesimal generator $\mathscr{A}_{\tau, \beta}$ defined by
$$
\mathscr{A}_{\tau, \beta} \,\Phi =\dot{\Phi}, \ \Phi \in \operatorname{Dom}\left(\mathscr{A}_{\tau, \beta}\right),$$
with domain
$$
\begin{aligned}
&\operatorname{Dom}\left(\mathscr{A}_{\tau, \beta}\right)  =\left\{\Phi \in  C^1 ([-\tau,0],\mathbb{Y_{\mathbb{C}}}) \mid \Phi(0) \in \mathbb{X}_{\mathbb{C}}, \, \dot{\Phi}(0) = [ d \Delta 
 - r(x) -  \beta q(u(x)) -  \beta q^{\prime}(u(x))u(x)] \Phi(0) 
\right. \\
 &\left.
\qquad  +\left[ F\left(u(x), \int_{\Omega} \mathcal{S}(x, y) u(y) \d y \right) + A(x) u(x)  \right] \Phi(0)  
 +   B(x)  u(x) \int_{\Omega} \mathcal{S}(x, y) \Phi(-\tau)(y) \d y
 \right\}.
\end{aligned}
$$
 The spectral set of $\mathscr{A}_{\tau, \beta}$ is
$$
\sigma\left(\mathscr{A}_{\tau, \beta}\right)=\left\{\lambda \in \mathbb{C} \mid \Pi(\tau, \beta, \lambda  ) \phi(x)=0 \text { for some } \phi(x) \in \mathbb{X}_{\mathbb{C}} \backslash \{0\} \right\},
$$
where
\begin{equation}\label{lambda}
    \begin{aligned}
\Pi(\tau, \beta, \lambda) \phi:= & 
 [ d \Delta  
 - r(x) -  \beta q(u(x)) -  \beta q^{\prime}(u(x))u(x)] \phi(x) \\
& + \left[ F\left(u(x), \int_{\Omega} \mathcal{S}(x, y) u(y) \d y \right) + A(x) u(x)  \right] \phi(x)  \\
& +  B(x)  u(x) e^{-\tau \lambda} \int_{\Omega} \mathcal{S}(x, y) \phi(y) \d y -\lambda \phi(x) =0.
\end{aligned}
\end{equation}

At trivial steady state $(0,\beta)$, for all $\beta \in \mathbb{R}^{+}$, the characteristic equation \eqref{lambda} becomes
\begin{equation}\label{CE0}
    \begin{aligned}
[d \Delta  
 - r(x) + F(0,0)  - \beta] \phi(x) =\lambda  \phi(x).
\end{aligned}
\end{equation}
From \eqref{beta*}, it is easy to see that $\lambda = \beta_* - \beta,$ 
and hence, when $\beta > \beta_*, $ the trivial steady state is locally asymptotically stable; when $\beta < \beta_*, $ the trivial steady state is unstable.

From a biological standpoint, negative steady states are not meaningful; therefore, we restrict our attention to positive steady-state solutions $u_{\beta}(x)$.
Next, we establish the conditions ensuring the local asymptotic stability of the positive steady-state solution $u_{\beta}(x)$, that is, when the spectrum $\sigma\left(\mathscr{A}_{\tau, \beta}\right)$ consists solely 
of eigenvalues with negative real part. Here, we assume that $\lambda_{\beta}$ is the eigenvalue of characteristic equation \eqref{lambda} with the corresponding eigenfunction $ \phi_\beta \in$ $\mathbb{X}_{\mathbb{C}}$, normalized such that $\left\|\phi_\beta\right\|_{X_{\mathbb{C}}}=1.$
We now present the following lemma.

\begin{lemma}
    Suppose $\kappa(\beta) \neq 0$. If $\Pi(\tau_{\beta}, \beta , \lambda_{\beta}) \phi_{\beta}=0$ with $Re (\lambda_{\beta}) \geq 0$, then $\left|\frac{ \kappa(\beta_*) \lambda_{\beta}  }{\beta-\beta_*}\right|$ is bounded for $\beta \in \mathcal{N}(\beta_*, \delta )$.
\end{lemma}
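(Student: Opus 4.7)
The plan is to project the characteristic identity $\Pi(\tau_\beta,\beta,\lambda_\beta)\phi_\beta=0$ onto $\varphi_*$, exploit the self-adjointness of $\mathcal{L}^{\beta_*}$ to cancel the leading differential contribution, and then bound the residual using the $O(|\beta-\beta_*|)$ smallness of $u_\beta$ supplied by Theorem \ref{ubx}. First I would substitute $d\Delta - r(x) = \mathcal{L}^{\beta_*} - F(0,0) + \beta_*$ into \eqref{lambda}, so that the characteristic equation becomes
\begin{equation*}
\mathcal{L}^{\beta_*}\phi_\beta + H(\beta;x)\phi_\beta + e^{-\tau_\beta\lambda_\beta}B(x)u_\beta(x)\,L(\phi_\beta)(x)=\lambda_\beta\phi_\beta,
\end{equation*}
with
\begin{equation*}
H(\beta;x)=(\beta_*-\beta)+\bigl[F(u_\beta,L(u_\beta))-F(0,0)\bigr]-\beta\bigl[q(u_\beta)-1+q'(u_\beta)u_\beta\bigr]+A(x)u_\beta.
\end{equation*}
Pairing with $\varphi_*$ in $\langle\cdot,\cdot\rangle$ and invoking Proposition 2.1(ii) together with $\mathcal{L}^{\beta_*}\varphi_*=0$ yields
\begin{equation*}
\lambda_\beta\langle\varphi_*,\phi_\beta\rangle=\langle\varphi_*,H(\beta;\cdot)\phi_\beta\rangle+e^{-\tau_\beta\lambda_\beta}\langle\varphi_*,B\,u_\beta\,L\phi_\beta\rangle.
\end{equation*}

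Next, I would estimate the right-hand side. Theorem \ref{ubx} together with the expansion \eqref{ux} gives $\|u_\beta\|_{L^\infty(\Omega)}\leq K|\beta-\beta_*|$ uniformly on $\mathcal{N}(\beta_*,\delta)$; the $C^m$ smoothness of $F,q$ together with $q(0)=1$ and one-term Taylor expansions then deliver $\|H(\beta;\cdot)\|_{L^\infty(\Omega)}\leq K'|\beta-\beta_*|$. Because $\tau_\beta\geq 0$ and $\mathrm{Re}(\lambda_\beta)\geq 0$, $|e^{-\tau_\beta\lambda_\beta}|\leq 1$. The operator $L$ is bounded on $L^2(\Omega)$ by (S$_2$), $B\in C(\bar\Omega)$, and the normalization $\|\phi_\beta\|_{\mathbb{X}_\mathbb{C}}=1$ controls $\|\phi_\beta\|_{\mathbb{Y}_\mathbb{C}}$ through the embedding $\mathbb{X}\hookrightarrow\mathbb{Y}$. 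Each inner product on the right is therefore $O(|\beta-\beta_*|)$, producing
\begin{equation*}
|\lambda_\beta|\,|\langle\varphi_*,\phi_\beta\rangle|\leq C\,|\beta-\beta_*|
\end{equation*}
with $C$ independent of $\beta\in\mathcal{N}(\beta_*,\delta)$.

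The main obstacle is to verify that $|\langle\varphi_*,\phi_\beta\rangle|$ is bounded below by a positive constant on a possibly shrunk neighborhood of $\beta_*$. I plan to decompose $\phi_\beta=c_\beta\varphi_*+\phi_\beta^\perp$ with $\phi_\beta^\perp\in(\mathbb{X}_1)_\mathbb{C}$ following \eqref{xy}, and project the characteristic equation onto $\mathbb{Y}_1$ via the projection $P$ from \eqref{i-p}, obtaining
\begin{equation*}
(\mathcal{L}^{\beta_*}-\lambda_\beta I)\phi_\beta^\perp=-P\bigl[H\phi_\beta+e^{-\tau_\beta\lambda_\beta}B\,u_\beta\,L\phi_\beta\bigr].
\end{equation*}
By Proposition 2.1(iii), $\mathcal{L}^{\beta_*}\colon\mathbb{X}_1\to\mathbb{Y}_1$ is a bijective self-adjoint operator; since $0$ is its simple principal eigenvalue on $\mathbb{X}$, the spectrum of $\mathcal{L}^{\beta_*}|_{\mathbb{X}_1}$ lies in $(-\infty,\mu_2]$ for some $\mu_2<0$. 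For every $\lambda_\beta$ with $\mathrm{Re}(\lambda_\beta)\geq 0$, standard resolvent estimates combined with elliptic regularity then produce an $\mathbb{X}$-bounded inverse of $(\mathcal{L}^{\beta_*}-\lambda_\beta I)|_{\mathbb{X}_1}$, uniform in $\beta$, yielding $\|\phi_\beta^\perp\|_{\mathbb{X}_\mathbb{C}}=O(|\beta-\beta_*|)$. The normalization $\|\phi_\beta\|_{\mathbb{X}_\mathbb{C}}=1$ then forces $|c_\beta|\to\|\varphi_*\|_{\mathbb{X}_\mathbb{C}}^{-1}$ as $\beta\to\beta_*$, so $|\langle\varphi_*,\phi_\beta\rangle|=|c_\beta|\|\varphi_*\|_{\mathbb{Y}_\mathbb{C}}^2\geq c_0>0$ on a smaller neighborhood $\mathcal{N}(\beta_*,\delta')$. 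Substituting this lower bound into the previous inequality gives $|\lambda_\beta|\leq (C/c_0)|\beta-\beta_*|$, which is equivalent to the claimed boundedness of $|\kappa(\beta_*)\lambda_\beta/(\beta-\beta_*)|$.
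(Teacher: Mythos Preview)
Your approach is correct but takes a genuinely different route from the paper's. The paper does \emph{not} pair with $\varphi_*$; it pairs the characteristic identity with $\phi_\beta$ itself and introduces the auxiliary operator $\mathcal{L}=d\Delta-\beta q(u_\beta)-r(x)+F\bigl(u_\beta,\int_\Omega\mathcal{S}(\cdot,y)u_\beta(y)\,\mathrm{d}y\bigr)$, for which the steady-state equation gives $\mathcal{L}u_\beta=0$. Since $u_\beta>0$, zero is the principal (hence largest) eigenvalue of $\mathcal{L}$, so $\langle\mathcal{L}\phi_\beta,\phi_\beta\rangle\le0$. Taking real parts of the paired identity and using $\mathrm{Re}(\lambda_\beta)\ge0$ then yields $0\le\mathrm{Re}(\lambda_\beta)\le O(|\beta-\beta_*|)$ directly, while the imaginary part is automatically $O(|\beta-\beta_*|)$ because every remaining term carries a factor $u_\beta$. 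No decomposition of $\phi_\beta$ and no resolvent analysis is required. Your route---pairing with $\varphi_*$ and then bounding $|\langle\varphi_*,\phi_\beta\rangle|$ from below via the complementary projection and a resolvent estimate for $(\mathcal{L}^{\beta_*}-\lambda_\beta I)|_{\mathbb{X}_1}$---also works but is longer. The uniform $\mathbb{X}$-resolvent bound you invoke is indeed valid: for self-adjoint $\mathcal{L}^{\beta_*}|_{\mathbb{X}_1}$ with spectrum in $(-\infty,\mu_2]$ and $\mathrm{Re}\lambda\ge0$ one has $|\lambda|\,\|(\mathcal{L}^{\beta_*}-\lambda)^{-1}\|_{L^2\to L^2}\le1$, which keeps the subsequent elliptic estimate uniform in $\lambda_\beta$; you should state this explicitly, since a naive elliptic bound would pick up a factor $|\lambda_\beta|$ and become circular. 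The paper's shortcut---exploiting positivity of $u_\beta$ to recognise $0$ as the top eigenvalue of the steady-state linearisation $\mathcal{L}$---is the main idea you bypassed, and it buys a considerably shorter proof.
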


\begin{proof}
Denote the operator $\mathcal{L}$ from $\mathbb{X}$ to $\mathbb{Y}$ as
$$\mathcal{L}= d \Delta  -  \beta q(u_{\beta}(x)) - r(x)+   F\left( u_{\beta}(x), \int_{\Omega} \mathcal{S}(\cdot, y) u_{\beta}(y) \d y\right).$$ 
From \eqref{1.2}, obviously, $\mathcal{L} u_{\beta}(x)=0$, which means that $0$ is the principal eigenvalue of $\mathcal{L}$ with principal eigenfunction $u_{\beta}(x)$, 
and hence $\left\langle \mathcal{L} \phi_\beta, \phi_\beta \right\rangle < 0$.
From \eqref{lambda}, we have
$$
\left\langle \mathcal{L}  \phi_\beta - \beta q^{\prime}(u_{\beta} )u_{\beta}  \phi_\beta +   A(x) u_\beta  \phi_\beta +    B(x) u_\beta  e^{- \tau_{\beta} \lambda_{\beta} } \int_{\Omega} \mathcal{S}(x, y) \phi_\beta(y) \d y -\lambda_{\beta} \phi_\beta, \phi_\beta \right\rangle=0.
$$
then
\begin{equation}\label{lab}
    \lambda_{\beta} <
\left\langle  ( A(x)-\beta q^{\prime}(u_{\beta} )) u_\beta(x) \phi_\beta, \phi_\beta \right\rangle +\left\langle   B(x) u_\beta(x) \,  e^{- \tau_\beta \lambda_{\beta}} \int_{\Omega} \mathcal{S}(x, y) \phi_\beta(y) \d y, \phi_\beta \right\rangle
\end{equation}
from $\left\langle \mathcal{L} \phi_\beta, \phi_\beta \right\rangle < 0$. 
Using the expression of $u_{\beta}(x)$   in \eqref{ux}
together with the calculation in \eqref{lab}, we have
 $$
\begin{aligned}
    \left|  \frac{ \kappa(\beta_*) \lambda_{\beta}  }{\beta-\beta_*}  \right|  
< &
  \left|\langle ( A(x)-\beta q^{\prime}(u_{\beta} ))   [ 1 + |\beta_{*}-\beta| m(x) ] \theta_4 \varphi_* 
\phi_{\beta}, \phi_{\beta} \rangle \right| \\
& +   \left|  \left\langle   B(x) 
\theta_4 \varphi_* [ 1 + |\beta_{*}-\beta| m(x) ]
e^{- \tau_{\beta} \lambda_{\beta} }   \int_{\Omega} \mathcal{S}(x, y) \phi_\beta(y) \d y \, \phi_{\beta}, \phi_{\beta} \right\rangle \right| \\
 \leq  &
\theta_4 \left\| \varphi_*(x) \right\|_{\infty} 
\left( 
1
+ |\beta_{*}-\beta|\left\|m(x)\right\|_{\infty}
\right)
\left( \left\| A(x)-\beta q^{\prime}(u_{\beta} )\right\|_{\infty}
+ \left\|  B(x)\right\|_{\infty} \left(\max _{\bar{\Omega} \times \bar{\Omega}} \mathcal{S}(x, y)\right)|\Omega| \right).
\end{aligned}
$$ 
By the continuous differentiability of $F(u,v)$ and $q(u)$, it follows that $A(x)$, $B(x)$ and $q^{\prime}(u_{\beta})$ are bounded in $X_{\mathbb{C}}$. Furthermore, by the
embedding theorem \cite{sobolev}, 
there exists $M >0$ such that 
$$\left\| A(x)- \beta q^{\prime}(u_{\beta})\right\|_{\infty}<M, \quad \text{and} \  \left\| B(x)\right\|_{\infty}<M.$$ 
Consequently, 
$\left|\frac{ \kappa(\beta_*) \lambda_{\beta}  }{\beta-\beta_*}\right|$ remains bounded for $\beta \in \mathcal{N}(\beta_*, \delta )$.
\end{proof}

The following theorem establishes the local stability of the positive steady-state solution.
\begin{theorem}\label{th3.2}
     If $ \kappa(\beta) \neq 0 $, and  
     $$
    (\mathrm{A}_1):  \, \kappa(\beta_*)<0 \text { and } \tilde{\kappa}(\beta_*)<0,  
     $$
 where $\tilde{\kappa}(\beta_*) = \theta_{1}-\theta_2 - \beta_* q^{\prime}(0) \theta_3$ with parameters $\theta_{i} $ $(i=1,2,3)$ defined in \eqref{c1c2},
then there exists a constant $\delta_1$ $(0<\delta_1 \leq \delta)$, such that for $ \beta \in \mathcal{N}_{-}(\beta_*, \delta_1 )$,  every eigenvalues $\lambda \in \sigma\left(\mathscr{A}_{\tau, \beta}\right)$
 has negative real part  for $\tau \geq 0$. Consequently, the positive steady-state solution $u_{\beta}(x)$ is locally asymptotically stable.
\end{theorem}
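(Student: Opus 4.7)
The plan is to argue by contradiction, combining the a priori bound from Lemma~3.1 with compactness and a Fredholm-type solvability condition obtained by projecting onto $\varphi_*$. Suppose no such $\delta_1$ exists; then there are sequences $\beta_n \in \mathcal{N}_{-}(\beta_*,\delta)$ with $\beta_n \to \beta_*^{-}$, delays $\tau_n \geq 0$, and eigenvalues $\lambda_n \in \sigma(\mathscr{A}_{\tau_n,\beta_n})$ with $\operatorname{Re}(\lambda_n)\geq 0$ and normalized eigenfunctions $\phi_n \in \mathbb{X}_{\mathbb{C}}$, $\|\phi_n\|=1$, satisfying $\Pi(\tau_n,\beta_n,\lambda_n)\phi_n=0$. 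By Lemma~3.1 together with $\kappa(\beta_*)\neq 0$, $|\lambda_n|=O(|\beta_n-\beta_*|)\to 0$; since $\operatorname{Re}(\lambda_n)\geq 0$ forces $|e^{-\tau_n\lambda_n}|\leq 1$, I pass to a subsequence with $\lambda_n/(\beta_n-\beta_*)\to\mu\in\mathbb{C}$ and $e^{-\tau_n\lambda_n}\to\omega$ with $|\omega|\leq 1$.

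\textbf{Identification of the limit $\phi_0$.} Rewriting $\Pi(\tau_n,\beta_n,\lambda_n)\phi_n=0$ in the form $d\Delta\phi_n=$ (uniformly $L^2$-bounded zeroth-order terms) and using elliptic regularity together with the compact embedding $H^2\hookrightarrow H^1$, I extract a further subsequence with $\phi_n\to\phi_0$ strongly in $\mathbb{X}_{\mathbb{C}}$; in particular $\|\phi_0\|=1$. Since $u_{\beta_n}\to 0$ and $F,q$ are smooth, passing to the limit in $\Pi(\tau_n,\beta_n,\lambda_n)\phi_n=0$ gives $\mathcal{L}^{\beta_*}\phi_0=0$, so $\phi_0=c\,\varphi_*$ for some $c\neq 0$.

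\textbf{Scalar limit equation via projection onto $\varphi_*$.} Multiplying $\Pi(\tau_n,\beta_n,\lambda_n)\phi_n=0$ by $\varphi_*$ and integrating over $\Omega$, symmetry of $\mathcal{L}$ together with $\mathcal{L}^{\beta_*}\varphi_*=0$ yields $\int_\Omega\varphi_*\mathcal{L}\phi_n\,dx = \int_\Omega\phi_n(\mathcal{L}-\mathcal{L}^{\beta_*})\varphi_*\,dx$. Substituting the approximation~\eqref{ux} for $u_{\beta_n}$, expanding $(\mathcal{L}-\mathcal{L}^{\beta_*})\varphi_*$ to first order in $(\beta_n-\beta_*)$, dividing by $(\beta_n-\beta_*)$ and passing to the limit, the leading-order contribution from $\mathcal{L}$ cancels exactly by the identity $\kappa(\beta_*)=\theta_1+\theta_2-\beta_* q'(0)\theta_3$, and what remains is the scalar relation
$$
\mu\,\kappa(\beta_*)=\theta_1+\omega\,\theta_2-\beta_* q'(0)\,\theta_3.
$$

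\textbf{Contradiction.} Taking real parts and noting $\operatorname{Re}(\omega)\in[-1,1]$, the right-hand side is a convex combination of $\tilde\kappa(\beta_*)$ and $\kappa(\beta_*)$. Under hypothesis $(A_1)$ both endpoints are strictly negative, so the right-hand side is negative, and because $\kappa(\beta_*)<0$ this forces $\operatorname{Re}(\mu)>0$. On the other hand, $\operatorname{Re}(\lambda_n)\geq 0$ together with $\beta_n-\beta_*<0$ gives $\operatorname{Re}(\mu)\leq 0$, the desired contradiction. The main obstacle I anticipate is the $\tau$-uniformity: because $\tau_n$ may diverge, $e^{-\tau_n\lambda_n}$ need not approach $1$ even though $\lambda_n\to 0$, so the right framework is to treat the whole closed disk $|\omega|\leq 1$ as a free parameter; the two conditions $\kappa(\beta_*)<0$ and $\tilde\kappa(\beta_*)<0$ are then exactly the endpoint conditions needed to exclude every $\omega$ in this disk, yielding the stated uniform spectral bound and hence local asymptotic stability of $u_\beta(x)$.
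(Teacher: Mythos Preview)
Your proposal is correct and follows essentially the same strategy as the paper: a contradiction argument via sequences $\beta_n\to\beta_*^-$, rescaling $\lambda_n$ by $\beta_n-\beta_*$ via Lemma~3.1, extracting a convergent subsequence (including the factor $e^{-\tau_n\lambda_n}$ in the closed unit disk), projecting the limiting equation onto $\varphi_*$ to obtain the scalar identity $\mu\,\kappa(\beta_*)=\theta_1+\omega\,\theta_2-\beta_*q'(0)\theta_3$, and then deriving a sign contradiction from $(\mathrm{A}_1)$. The only notable difference is that the paper establishes compactness of $\{\phi_n\}$ through the explicit Lyapunov--Schmidt decomposition $\phi_{\beta_n}=\alpha_{\beta_n}\varphi_*+(\beta_n-\beta_*)z_{\beta_n}$ and bounds $z_{\beta_n}$ separately, whereas you invoke elliptic regularity directly; both routes lead to the same limit problem and the same scalar solvability condition.
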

\begin{proof}
   By contradiction, assuming   there exists a sequence $\left\{\beta_n\right\}_{n=1}^{\infty} \subset \mathcal{N}_{-}(\beta_*, \delta_1 )$ such that 
   $\left\{\tau_{\beta_n},\lambda_{\beta_n},\phi_{\beta_n}\right\}_{n=1}^{\infty} $ solves characteristic equation $ \Pi \left(\tau_{\beta_n},\beta_n,\lambda_{\beta_n}\right) \phi_{\beta_n}=0$ with
   $Re(\lambda_{\beta_{n}}) \geq 0$, $\tau_{\beta_n} \geq 0,$ and $ \lim _{n \rightarrow \infty} \beta_n=\beta_*$. 
 Ignoring a scalar factor, we suppose that $\|\phi_{\beta_{n}}\|_{Y_{\mathbb{C}}}^2=\|\varphi_{*}\|_{Y_{\mathbb{C}}}^2.$
   Note that $ \mathbb{X}_{\mathbb{C}}=\operatorname{Ker} (\,\mathcal{L}^{\beta_{*}} )_{\mathbb{C}} \oplus (\mathbb{X}_1)_{\mathbb{C}},$
in the light of Lemma 3.1 in \cite{LJ2022JMAA},
 for $\phi_{\beta_{n}} \in \mathbb{X}_{\mathbb{C}}\backslash \{0\}$, $\phi_{\beta_{n}}$ can be represented as   
\begin{equation}\label{3.3}
\phi_{\beta_n}=\alpha_{\beta_n} \varphi_{*}(x)+\left(\beta_n -\beta_* \right) {z}_{\beta_{n}}(x), \,\alpha_{\beta_n} \in \mathbb{C},  {z}_{\beta_{n}} \in (\mathbb{X}_1)_{\mathbb{C}},
\end{equation}
and hence,
\begin{equation*}\label{3.33}
\left\|\phi_{\beta_n}\right\|_{Y_{\mathbb{C}}}^2
=\left\langle \phi_{\beta_n}, \phi_{\beta_n}\right\rangle
=\alpha_{\beta_n}^2\|\varphi_{*}\|_{Y_{\mathbb{C}}}^2+ \left(\beta_* -\beta_n \right)^2\left\|{z}_{\beta_{n}}\right\|_{Y_{\mathbb{C}}}^2=\|\varphi_{*}\|_{Y_{\mathbb{C}}}^2.
\end{equation*}
Consequently, $|\alpha_{\beta_{n}}| \leq 1$  and 
$\lim _{n \rightarrow \infty} \alpha_{\beta_{n}}=1 $.
By Lemma 3.2, since $\left|\frac{ \kappa(\beta_*) \lambda_{\beta}  }{\beta-\beta_*}\right|$ is bounded for $\beta \in \mathcal{N}(\beta_*, \delta_1 )$, we can write 
$\lambda_{\beta_n}$ as 
 \begin{equation}\label{l}
\lambda_{\beta_n}=\frac{l_{\beta_n}}{\kappa(\beta_{*})}\left(\beta_n-\beta_* \right),
  \end{equation}
 with $Re(l_{\beta_{n}}) \geq 0$. 
 Substituting 
$u_{\beta_{n}}(x)=\frac{\theta_4}{\kappa(\beta_{*})}(\beta_{n}-\beta_{*}) \varphi_*(x) ( 1 +
 (\beta_{n}-\beta_{*}) 
m(x) ),  $  Eqs. \eqref{3.3} and \eqref{l} into  $ \Pi  (\tau_{\beta_n},\beta_n,\lambda_{\beta_n}) \phi_{\beta_n} =0,$ we can see that 
 $\left(  {z}_{\beta_{n}} , \alpha_{\beta_n}, l_{\beta_n}, \tau_{\beta_n}, \beta_{n} \right)$ satisfies the following equation
\begin{equation}\label{H1}
\begin{aligned}
&\left(d \Delta+ F(0,0)  - r(x)-\beta_*\right) z_{\beta_{n}} 
+ 
\left(
1  - \beta_n \bar{u}_{\beta_{n}}( q^{\prime}(0)+  q^{\prime}(u_{\beta_n} ))- \frac{l_{\beta_{n}} }{\kappa(\beta_*)}
\right) \phi_{\beta_{n}}
 \\
& +    
\left( F_{1}^{\prime}(0,0) \bar{u}_{\beta_{n}}  +F_{2}^{\prime}(0,0) 
\int_{\Omega} \mathcal{S}(\cdot, y) \bar{u}_{\beta_{n}} (y) \d y
   + A(x) \bar{u}_{\beta_{n}}
\right) \phi_{\beta_{n}}
 \\
& +     B(x) \bar{u}_{\beta_{n}}
\int_{\Omega} \mathcal{S}(\cdot, y) \phi_{\beta_{n}}(y) \d y \, e^{- \tau_{\beta_{n}} \frac{l_{\beta_{n}}}{\kappa(\beta_*)} \left( \beta_{n}-\beta_{*}\right)}  =0,
\end{aligned}  
\end{equation}
where $\bar{u}_{\beta_{n}}(x)=\frac{\theta_4}{\kappa(\beta_{*})} \varphi_*(x) ( 1 +
 (\beta_{n}-\beta_{*}) 
m(x) ).$

Next, we address the convergence of sequence 
\begin{equation}\label{xl3.7}
    \left\{
\left(  z_{\beta_n},\alpha_{\beta_n},l_{\beta_n},  e^{- \frac{\tau_{\beta_n}}{\kappa(\beta_*)} Re(l_{\beta_n} )\left( \beta_{n}- \beta_{*}\right)},e^{-\mathrm{i} \frac{\tau_{\beta_n}}{\kappa(\beta_*)} Im(l_{\beta_n} )\left(\beta_{n}- \beta_{*}\right)} \right)\right\}_{n=1}^{\infty}.
\end{equation}
Let $\beta_2$ be the second eigenvalue of system \eqref{1-b}, then for any $\psi$ satisfies $\left\langle \varphi_{*}, \psi \right\rangle=0$, we have 
$$
\begin{aligned}
\left|\left\langle\left(d\Delta+F(0,0) -r(x)-\beta_*\right) \psi, \psi \right\rangle\right|  
 &=
\left|
\langle -(d\Delta+F(0,0) -r(x)) \psi , \psi \rangle 
+  \langle   \beta_*  \psi , \psi \rangle
\right| \\
&\geq 
\left|
\langle  -\beta_2 \psi , \psi \rangle 
+  \langle  \beta_* \psi , \psi \rangle
\right|= |\beta_* - \beta_2| \left\| \psi \right\|_{Y_{\mathbb{C}}}^2. 
\end{aligned}
$$
Hence, by taking the inner product of  \eqref{H1} with $z_{\beta_{n}}$, we obtain
$$
\begin{aligned}
\left\|z_{\beta_{n}}\right\|_{Y_{\mathbb{C}}}^2 \leq & \, \frac{1}{|\beta_* - \beta_2|} \left|\left\langle\left(d \Delta+ F(0,0)  - r(x)-\beta_*\right) z_{\beta_n}, z_{\beta_n}\right\rangle\right| \\
= &  \,  \frac{1}{|\beta_* - \beta_2|}  \left|\left\langle  
     F_{2}^{\prime}(0,0) \phi_{\beta_{n}}
\int_{\Omega} \mathcal{S}(\cdot, y) \bar{u}_{\beta_{n}} (y) \d y 
    +B(x) \bar{u}_{\beta_{n}}
\int_{\Omega} \mathcal{S}(\cdot, y) \phi_{\beta_{n}}(y) \d y \, e^{-  \frac{\tau_{\beta_{n}} l_{\beta_n}}{\kappa(\beta_{*})} \left( \beta_{n}-\beta_{*}\right)}, z_{\beta_n}\right\rangle\right| \\
& +  
\frac{1}{|\beta_* - \beta_2|}  \left|\left\langle \left(
1 - \frac{l_{\beta_n}}{\kappa(\beta_{*})} -  \bar{u}_{\beta_{n}}  \left(\beta_n q^{\prime}(0)+\beta_n q^{\prime}(u_{\beta_{n}} ) + F_{1}^{\prime}(0,0)+ A(x) \right) 
\right) \phi_{\beta_{n}}  , z_{\beta_n}\right\rangle\right|
\\
\leq &  \, \frac{1}{|\beta_* - \beta_2|}  
 \left\| 1 - \frac{l_{\beta_n}}{\kappa(\beta_{*})} -  \bar{u}_{\beta_{n}}  \left(\beta_n q^{\prime}(0)+\beta_n q^{\prime}(u_{\beta_{n}} ) + F_{1}^{\prime}(0,0)+ A(x) \right)   \right\|_{\infty}
\left|\left\langle  \phi_{\beta_{n}}  , z_{\beta_n}\right\rangle\right| \\
& +   \frac{1}{|\beta_* - \beta_2|}  
\left\|
    F_{2}^{\prime}(0,0) 
\right\|_{\infty}  
\left\|
 \bar{u}_{\beta_{n}}(x) 
\right\|_{\infty} |\Omega|
  \max _{\bar{\Omega} \times \bar{\Omega}} \mathcal{S}(x, y) \left|\left\langle  \phi_{\beta_{n}}  , z_{\beta_n}\right\rangle\right|\\
& +  \frac{1}{|\beta_* - \beta_2|}  \left\|     B(x) \bar{u}_{\beta_{n}}
 e^{-  \frac{\tau_{\beta_{n}} l_{\beta_n}}{\kappa(\beta_{*})} \left( \beta_{n}-\beta_{*}\right)} \right\|_{\infty} |\Omega|  \max _{\bar{\Omega} \times \bar{\Omega}} \mathcal{S}(x, y) \left|\left\langle  \phi_{\beta_{n}}  , z_{\beta_n}\right\rangle\right|\\
\leq  & \,
3\frac{\tilde{M} \|\varphi_{*}\|_{Y_{\mathbb{C}}} }{|\beta_* - \beta_2|} 
\left\|z_{\beta_n}\right\|_{Y_{\mathbb{C}}}
+
3\frac{ \tilde{M} \left|\beta_*-\beta_n \right|}{|\beta_* - \beta_2|}   \left\|z_{\beta_n}\right\|_{Y_{\mathbb{C}}}^2,
\end{aligned}
$$
for constant $\tilde{M}>0$.
It then follows that $\left\{z_{\beta_n}\right\}_{n=1}^{\infty}$ is bounded in $Y_{\mathbb{C}}$. Since the operator $d \Delta+ F(0,0)  - r(x)-\beta_*:\left(\mathbb{X}_1\right)_{\mathbb{C}}\mapsto\left(\mathbb{Y}_1\right)_{\mathbb{C}}$ has a bounded inverse, by applying $\left(d \Delta+ F(0,0)  - r(x)-\beta_*\right)^{-1}$ on \eqref{H1}, we know that $\left\{z_{\beta_n}\right\}_{n=1}^{\infty}$ is also bounded in $\mathbb{X}_{\mathbb{C}}$.

Due to the fact that  $\mathbb{X}_{\mathbb{C}}$ is compactly embedded into $\mathbb{Y}_{\mathbb{C}}$,
the sequence \eqref{xl3.7}
is precompact in $\mathbb{Y}_{\mathbb{C}} \times \mathbb{R}^3 \times \mathbb{C}$.
Then, there is a subsequence of \eqref{xl3.7}
convergent to $\left(z_*, \alpha_*, l_*,  \omega_1, e^{-\mathrm{i} \omega_2}\right)$   in $\mathbb{Y}_{\mathbb{C}} \times \mathbb{R}^3 \times \mathbb{C}$, 
and satisfies \eqref{H1}, that is
\begin{equation}\label{reim}
    \begin{aligned}
  &\left( d \Delta+ F(0,0)  - r(x)-\beta_* \right)  z_{*} +
 \varphi_* \left(
1 - 2\beta_* q^{\prime}(0)\frac{\theta_4 \varphi_*}{\kappa(\beta_*)}     - \frac{l_{*}}{\kappa(\beta_*)}  
\right)
  \\
 &+ \frac{\theta_4  \varphi_* }{\kappa(\beta_*)}
 \left(
2 F_{1}^{\prime}(0,0)  \varphi_*   +
F_{2}^{\prime}(0,0) \int_{\Omega} \mathcal{S}(\cdot, y) \varphi_{*}(y)    \d y
\right)
\\
 &
+ \frac{\theta_4 \varphi_*  }{\kappa(\beta_*)}  F_{2}^{\prime}(0,0)     \omega_1 e^{- \mathrm{i} \omega_2}  
\int_{\Omega} \mathcal{S}(\cdot, y) \varphi_{*}(y)    \d y  =0,\\
\end{aligned}
\end{equation}
where
$$
 z_*  \in \mathbb{Y}_{\mathbb{C}}, \quad \alpha_*=1, \quad
 l_* \in \mathbb{C}\left(Re(l_*)  \geq 0\right),  \quad  \omega_1 \in (0,1]  \text { and }    \omega_2 \in[0,2 \pi).
$$
Further we have $z_{*} \in \left(\mathbb{X}_1\right)_{\mathbb{C}}$ as $\left(\mathbb{X}_1\right)_{\mathbb{C}}$ is closed. 
Consequently, by taking the inner product of $\varphi_*(x)$
with both sides of \eqref{reim}, we obtain
$$
\theta_1  + \theta_2  \omega_1 e^{-i \omega_2} -\beta_* q^{\prime}(0)\theta_3 
  -     l_*=0.
$$
Separating the real and imaginary parts of above equation yields
$$
\left\{\begin{array}{l}
Re:     Re(l_{*}) =   \theta_1 +    \theta_2  \omega_1 \cos(\omega_{2}) - \beta_* q^{\prime}(0)\theta_3, \\
Im:     \omega_1 \theta_2  \sin(\omega_{2})+ Im(l_* )  =0.
\end{array}\right.
$$
Under condition $(\mathrm{A}_1)$, we have
$$
  Re(l_{*})=  \theta_1 +    \theta_2  \omega_1 \cos(\omega_{2}) - \beta_* q^{\prime}(0)\theta_3 < 0,
$$
which contradicts 
  $Re(l_{*}) \geq 0.$
Therefore,  all the  eigenvalues $\lambda \in \sigma\left(\mathscr{A}_{\tau, \beta}\right)$
 have negative real part for $\tau \geq 0$, and the positive steady-state solution $u_{\beta}(x)$ is locally asymptotically stable for $ \beta \in \mathcal{N}_{-}(\beta_*, \delta_1 )$.
\end{proof}

\begin{remark}\label{remark3.3}

For the case where another positive steady state exists, namely when $ \kappa(\beta_*)>0$ \text { and }  $ \beta \in \mathcal{N}_{+}(\beta_*, \delta )$, 
we adopt the sequence-convergence argument used in Theorem \ref{th3.2}. 
Assume that there exists a sequence $\left\{\beta_n\right\}_{n=1}^{\infty} \subset \mathcal{N}_{+}(\beta_*, \delta_1 )$ with  $ \lim _{n \rightarrow \infty} \beta_n=\beta_*,$ such that 
   $\left\{\tau_{\beta_n},\lambda_{\beta_n},\phi_{\beta_n}\right\}_{n=1}^{\infty} $ satisflies the characteristic equation \eqref{lambda}, without imposing any prior assumption on $Re (\lambda_{\beta_n})$. Following the same line of reasoning as before, we obtain that if
   $  \kappa(\beta_*)>0 \text { and } \tilde{\kappa}(\beta_*)>0,$ then
   $$
  Re(l_{*})=  \theta_1 +    \theta_2  \omega_1 \cos(\omega_{2}) - \beta_* q^{\prime}(0)\theta_3 > 0,
$$
which implies that there exists at least one eigenvalue $\lambda$ with a positive real part. Consequently, the positive steady-state solution $u_{\beta}(x)$ is unstable for  $\tau \geq 0$.

\end{remark}

\section{Hopf bifurcation}

We first rule out the possibility of a 
Hopf bifurcation at the trivial steady state $u_{\beta_*}(x) = 0$ based on the characteristic equation \eqref{CE0}.
Hence, our subsequent analysis concentrates on proving the existence of a Hopf bifurcation at the positive steady-state solution
$u_\beta(x)$, established
in Theorem \ref{ubx} for $ \beta \in  \mathcal{N}_{-}(\beta_*, \delta_1 ) $ or $ \mathcal{N}_{+}(\beta_*, \delta_1 )$.

As we know, 
$\mathrm{i} \omega \in \sigma\left(\mathscr{A}_{\tau, \beta}\right)$ $(\omega >0)$ for some $\tau \geq 0$ if and only if 
\begin{equation}\label{lambdaiw}
    \begin{aligned}
 [ d \Delta  
 - r(x) -    \beta q(u_{\beta}(x))  -  \beta q^{\prime}(u_{\beta}(x))u_{\beta}(x)] \phi(x)  +  B(x)  u_{\beta}(x) e^{-\mathrm{i} \theta  } \int_{\Omega} \mathcal{S}(x, y) \phi(y) \d y -\mathrm{i} \omega \phi(x)\\
 +\left[ F\left(u_{\beta}(x), \int_{\Omega} \mathcal{S}(x, y) u_{\beta}(y) \d y \right) +A(x) u_{\beta}(x)  \right] \phi(x)  =0
\end{aligned}
\end{equation}
is solvable for a pair of $(\omega, \theta)$, where $\theta= \omega \tau \geq 0, $ and  $ \phi \in \mathbb{X}_{\mathbb{C}} \backslash\{0\}$. 
Regarding to the form of $u_{\beta}(x)$ in \eqref{ux}, we represent $\omega$ as
\begin{equation}\label{op}
 \omega =\frac{l }{\kappa(\beta_*)} \left(\beta -\beta_* \right),   \ l >0.  
\end{equation}
The function  $ \phi \in \mathbb{X}_{\mathbb{C}} \backslash\{0\}$ can be represented by  
\begin{equation}\label{phi4.3}
\phi(x)=\alpha  \varphi_{*}(x)+\left(\beta  -\beta_* \right) {z} (x), \,\alpha  \in \mathbb{C},  {z}  \in (\mathbb{X}_1)_{\mathbb{C}}, \text{ with }  \|\phi\|_{Y_{\mathbb{C}}}^2=\|\varphi_{*}\|_{Y_{\mathbb{C}}}^2. 
\end{equation}
Consequently,
\begin{equation}\label{phi4.4}
    h_1 \left(\alpha, z,\beta \right) = \left(\alpha ^2-1\right)\|\varphi_{*}\|_{Y_{\mathbb{C}}}^2+\left(\beta  -\beta_{*}\right)^2\| z  \|_{Y_{\mathbb{C}}}^2=0.
\end{equation}
 Substituting    Eqs. \eqref{op}, \eqref{phi4.3} and $u_{\beta}(x)$ in \eqref{ux} into characteristic equation \eqref{lambdaiw}, we have
\begin{equation} \label{3.14}
    \begin{aligned}
  h_2\left(\theta, \alpha, l,z, \beta \right) = &\left( d \Delta+ F(0,0)  - r(x)-\beta  \right)  z - \left( \beta  \bar{u}_{\beta}  (q^{\prime}(0) + q^{\prime}(u_{\beta}) ) + \frac{il   }{\kappa(\beta_*)} \right) [\alpha  \varphi_{*}(x)+ \left( \beta  -\beta_* \right) {z} (x) ] \\
  & + \left( F_{1}^{\prime}(0,0)  \bar{u}_{\beta}  +
F_{2}^{\prime}(0,0) \int_{\Omega} \mathcal{S}(\cdot, y) \bar{u}_{\beta}(y) \d y + A(x)\bar{u}_{\beta} \right) [ \alpha  \varphi_{*}(x)+ \left( \beta  -\beta_* \right) {z} (x) ] \\
 & + B(x) \bar{u}_{\beta} e^{- \mathrm{i} \theta} \int_{\Omega} \mathcal{S}(\cdot, y) [ \alpha  \varphi_{*}(y)+ \left( \beta  -\beta_* \right) {z} (y) ]    \d y
=0.\\
\end{aligned}
\end{equation}
Define $ H \left(\theta, \alpha, l,z, \beta \right) :  [0,2\pi) \times \mathbb{R}^2 \times (\mathbb{X}_1)_{\mathbb{C}}  \times \mathbb{R} \longrightarrow (\mathbb{Y} )_{\mathbb{C}} \times \mathbb{R} $ by $ H=(h_1,h_2).$ 
Our purpose is to find the zeros of $H$ for $\beta$ near $\beta_*$. We firstly show that the existence of the zeros of $H$ when $\beta= \beta_*.$ Clearly, $\alpha=\alpha_{\beta_*}=1$ from \eqref{phi4.4}.
Multiplying both sides of \eqref{3.14} by $\varphi_*(x)$ and integrating over $\Omega$ at $\beta = \beta_*$,
we obtain that \eqref{3.14} is
solvable if and only if 
\begin{equation}\label{ri}
    \left\{\begin{array}{l}
Re:  \theta_1+  \theta_2 \cos \theta  - \beta_* q^{\prime}(0)\theta_3=0, \\
Im:\theta_2   \sin \theta  +l  =0.
\end{array}\right.
\end{equation}
Under the condition $$ (\mathrm{A}_2): \kappa(\beta_*)<0 \text { and } \tilde{\kappa}( \beta_* )>0, $$ 
it is straightforward to see that
$ \theta_2<0$. Then, from the imaginary part in \eqref{ri}, we know 
$\sin \theta>0$. Solving \eqref{ri} yields 
\begin{equation}\label{theta12}
\begin{aligned}
   \theta&=  \theta_{\beta_*} 
     =    \arccos \left( \frac{ \beta_* q^{\prime}(0) \theta_3 - \theta_1}{ \theta_2} \right) \in (0, \pi),  \\
   l&=   l_{\beta_*}   =     \sqrt{-[  \theta_{1}+\theta_2 - \beta_* q^{\prime}(0) \theta_3][  \theta_{1}-\theta_2 - \beta_* q^{\prime}(0) \theta_3]} =   \sqrt{ - \kappa(\beta_*)\tilde{\kappa}( \beta_*)}.
\end{aligned}
\end{equation}
Consequently, $z=z_{\beta_*}$ is determined from
\eqref{3.14}.
Therefore, we know that  $H \left(\theta, \alpha, l,z, \beta_* \right)=0$ has a unique solution $\left(\theta_{\beta_*}, 1, l_{\beta_*}, z_{\beta_*} \right).$
Now, we show that the existence of solutions of $H \left(\theta, \alpha, l,z, \beta \right)=0$ for $ \beta \in  \mathcal{N}_{-}(\beta_*, \delta_2 ),$ and we have the following results.

\begin{theorem}\label{th4.3}
    Assume that the condition $ (\mathrm{A}_2) $   holds, then there exist a constant $\delta_2$ and a continuously differentiable mapping $\beta \longmapsto \left(\theta_{\beta}, \alpha_{\beta},   l_{\beta}, z_{\beta} \right) \in [0,2\pi) \times \mathbb{R}^2 \times (\mathbb{X}_1)_{\mathbb{C}}$ such that 
$$
H(\theta_{\beta}, \alpha_{\beta},   l_{\beta}, z_{\beta} )=0
$$
  for  $ \beta \in \mathcal{N}_{-}(\beta_*, \delta_2 ),$  $0<\delta_2 \leq \delta_1$. 
  Further, for each fixed  $ \beta \in \mathcal{N}_{-}(\beta_*, \delta_2 ),$
  $\lambda=i \omega (\omega>0) \in   \sigma\left(\mathscr{A}_{\tau, \beta}\right)$ if and only if   
  $$   \omega =   \omega_{\beta} = \frac{ l_\beta}{\kappa(\beta_*)}\left(\beta-\beta_* \right), \quad \tau=\tau_{k}=\frac{\theta_{\beta}+ 2k \pi}{\omega_{\beta}}, \, k \in \mathbb{N}_0. 
   $$
and $\phi= \phi_{\beta}= \alpha_{\beta}  \varphi_{*}(x)+\left(\beta  -\beta_* \right) z_{\beta}.$ 
\end{theorem}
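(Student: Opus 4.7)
The plan is to apply the implicit function theorem (IFT) to the equation $H(\theta,\alpha,l,z,\beta)=0$ in a neighborhood of the base point $(\theta_{\beta_*},1,l_{\beta_*},z_{\beta_*},\beta_*)$ already produced earlier in Section 4, where $(\theta_{\beta_*},l_{\beta_*})$ are the explicit values in \eqref{theta12} and $z_{\beta_*}\in(\mathbb{X}_1)_\mathbb{C}$ is determined uniquely by the $(\mathbb{Y}_1)_\mathbb{C}$-projection of \eqref{3.14} at $\beta=\beta_*$. Since $F\in C^m$ with $m\ge 3$ and $q\in C^2$, the map $H$ is continuously differentiable jointly in all arguments, so the substantive task reduces to showing that the Fréchet derivative $\mathcal{D}:=D_{(\theta,\alpha,l,z)}H(\theta_{\beta_*},1,l_{\beta_*},z_{\beta_*},\beta_*)$ is a bounded linear isomorphism from $\mathbb{R}^3\times(\mathbb{X}_1)_\mathbb{C}$ onto $\mathbb{R}\times(\mathbb{Y})_\mathbb{C}$.

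To verify invertibility of $\mathcal{D}$, I would use the splitting $(\mathbb{Y})_\mathbb{C}=\mathrm{span}_\mathbb{C}\{\varphi_*\}\oplus(\mathbb{Y}_1)_\mathbb{C}$ from \eqref{xy} and read $\mathcal{D}$ off in a block-triangular fashion. The $h_1$-component contributes $\partial_\alpha h_1=2\|\varphi_*\|_{\mathbb{Y}_\mathbb{C}}^2>0$ while all other partials of $h_1$ vanish at $\beta=\beta_*$, so $d\alpha$ is uniquely determined. The $(\mathbb{Y}_1)_\mathbb{C}$-projection of the $h_2$-component reduces to $\mathcal{L}^{\beta_*}(dz)=\mathrm{rhs}$, and Proposition 2.1(iii) guarantees $\mathcal{L}^{\beta_*}|_{(\mathbb{X}_1)_\mathbb{C}}\to(\mathbb{Y}_1)_\mathbb{C}$ is an isomorphism, uniquely determining $dz$. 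Finally, the $\varphi_*$-projection of the $h_2$-component is exactly the finite-dimensional system \eqref{ri}; differentiating its real and imaginary parts with respect to $(\theta,l)$ gives the coefficient matrix
\[
\begin{pmatrix} -\theta_2\sin\theta_{\beta_*} & 0 \\ \theta_2\cos\theta_{\beta_*} & 1 \end{pmatrix},
\]
whose determinant is $-\theta_2\sin\theta_{\beta_*}$. Under $(\mathrm{A}_2)$, the identity $\kappa(\beta_*)-\tilde{\kappa}(\beta_*)=2\theta_2$ combined with $\kappa(\beta_*)<0<\tilde{\kappa}(\beta_*)$ forces $\theta_2<0$; simultaneously, the formula in \eqref{theta12} places $\theta_{\beta_*}\in(0,\pi)$, so $\sin\theta_{\beta_*}>0$. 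The determinant is therefore strictly positive, $\mathcal{D}$ is invertible, and the IFT yields $\delta_2\in(0,\delta_1]$ together with the continuously differentiable branch $(\theta_\beta,\alpha_\beta,l_\beta,z_\beta)$ on $\mathcal{N}_-(\beta_*,\delta_2)$.

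For the ``if'' direction of the spectral characterization, setting $\omega_\beta=l_\beta(\beta-\beta_*)/\kappa(\beta_*)$ and $\tau_k=(\theta_\beta+2k\pi)/\omega_\beta$ gives $\omega_\beta\tau_k\equiv\theta_\beta\pmod{2\pi}$, so $e^{-\mathrm{i}\omega_\beta\tau_k}=e^{-\mathrm{i}\theta_\beta}$, and substitution of $\phi_\beta=\alpha_\beta\varphi_*+(\beta-\beta_*)z_\beta$ into \eqref{lambdaiw} collapses to $H(\theta_\beta,\alpha_\beta,l_\beta,z_\beta,\beta)=0$. For the ``only if'' direction, given $\mathrm{i}\omega'\in\sigma(\mathscr{A}_{\tau',\beta})$ with $\omega'>0$, $\tau'\geq 0$ and eigenfunction $\phi'$ normalized so that $\|\phi'\|_{\mathbb{Y}_\mathbb{C}}=\|\varphi_*\|_{\mathbb{Y}_\mathbb{C}}$, I would put $\theta'=\omega'\tau'\bmod 2\pi$ and read off $(\alpha',z')$ and $l'$ from the decompositions \eqref{op}--\eqref{phi4.3}, obtaining a zero of $H(\cdot,\beta)$. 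Local IFT uniqueness identifies this tuple with $(\theta_\beta,\alpha_\beta,l_\beta,z_\beta)$ whenever it lies near the base point; to upgrade this to the global statement within $\mathcal{N}_-(\beta_*,\delta_2)$, I would run a contradiction-and-compactness argument parallel to the proof of Theorem \ref{th3.2}, invoking Lemma 3.2 to bound $|\kappa(\beta_*)\omega'/(\beta-\beta_*)|$ and the compact embedding $\mathbb{X}_\mathbb{C}\hookrightarrow\mathbb{Y}_\mathbb{C}$ to extract a convergent subsequence, forcing any putative extraneous branch to limit on the base point and hence to coincide with the IFT branch after shrinking $\delta_2$.

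The principal obstacle is the invertibility of $\mathcal{D}$: its resolution hinges on the twin consequences of $(\mathrm{A}_2)$, namely $\theta_2<0$ and $\theta_{\beta_*}\in(0,\pi)$, which together rule out degeneracy of the $(\theta,l)$-subsystem. The secondary difficulty is promoting the \emph{local} IFT uniqueness to the \emph{global} ``only if'' characterization over $\mathcal{N}_-(\beta_*,\delta_2)$; this is the reason one must cite Lemma 3.2 and recycle the compactness machinery of Theorem \ref{th3.2} rather than appealing to the IFT alone.
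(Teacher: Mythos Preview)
Your proposal is correct and follows essentially the same route as the paper: apply the implicit function theorem at $(\theta_{\beta_*},1,l_{\beta_*},z_{\beta_*},\beta_*)$, verify invertibility of the Fréchet derivative by exploiting that $(\mathrm{A}_2)$ forces $\theta_2<0$ and $\sin\theta_{\beta_*}>0$, and then invoke the convergence/compactness argument of Theorem~\ref{th3.2} together with Lemma~3.2 for the uniqueness part of the spectral characterization. The only cosmetic difference is that the paper checks injectivity of $\mathscr{T}$ and then appeals to its Fredholm-index-zero property (inherited from $\mathcal{L}^{\beta_*}$, Proposition~2.1(iv)) to conclude bijectivity, whereas you argue bijectivity directly via the block-triangular solve; both reduce to the same $2\times 2$ determinant $-\theta_2\sin\theta_{\beta_*}\neq 0$.
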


\begin{proof}
From above analysis, we know $H \left(\theta_{\beta_*}, \alpha_{\beta*}, l_{\beta_*}, z_{\beta_*} \right)=0.$ 
To establish the existence of a solution to 
$$H \left(\theta, \alpha, l,z, \beta  \right)=0 \ \text{for} \  \beta \in \mathcal{N}_{-}(\beta_*, \delta_2 ),$$ 
 via the implicit function theorem, we compute the Fréchet derivative of $H $ with respect to $(\theta, \alpha,   l, z)$ at  
 $\left(\theta_{\beta_*}, \alpha_{\beta_*} ,  l_{\beta_*}, z_{\beta_*} \right)$ which yields the operator 
  $$\mathscr{T}=(\mathscr{T}_1 , \mathscr{T}_2) :    [0,2\pi) \times \mathbb{R}^2 \times (\mathbb{X}_1)_{\mathbb{C}} \longrightarrow (\mathbb{Y} )_{\mathbb{C}} \times \mathbb{R},$$
 where
\begin{equation}\label{3.16}
\left\{\begin{aligned}
 \mathscr{T}_1\left(w_1, w_2, w_3, w_4 \right) = &  2\|\varphi_{*}\|_{Y_{\mathbb{C}}}^2 w_2
 \\
\mathscr{T}_2\left(w_1, w_2, w_3, w_4 \right) = & \left(d \Delta+ F(0,0)   - r(x)-\beta_* \right) w_4 - \frac{\mathrm{i} \varphi_{*}}{  \kappa (\beta_*)} w_3\\
&
+ \left(    
 \frac{\theta_4 \varphi_{*} }{\kappa(\beta_*)} \left[ F_{1}^{\prime}(0,0) - \beta_* q^{\prime}(0)   \right]   - \frac{\mathrm{i} l_{\beta_*}}{  \kappa (\beta_*)}    \right) 
 \varphi_{*} w_2 \\
&+ e^{- \mathrm{i} \theta_{\beta_*}}     \frac{\theta_4 \varphi_{*}   }{\kappa(\beta_*)}  F_{2}^{\prime}(0,0) 
\int_{\Omega} \mathcal{S}(\cdot, y) \varphi_* (y) \d y \, ( w_2 -\mathrm{i}  w_1),
\end{aligned}\right.
\end{equation}
and the perturbation $(w_1, w_2, w_3, w_4 )$ gives the direction and magnitude of a small change in each variable corresponding to $(\theta, \alpha,   l, z).$
If $\mathscr{T}$ is invertible, 
$H \left(\theta, \alpha, l, z, \beta \right)=0$
 is guaranteed, thereby completing the proof.

Firstly, we show that $\mathscr{T}$ is injective.  From $\mathscr{T}(w_1, w_2, w_3, w_4)=0$, it follows immediately from the first equation of \eqref{3.16} that $w_2 =0$. $\mathscr{T}_2=0$ is equivalent to 
 $$
\left(d \Delta+ F(0,0)   - r(x)-\beta_* \right) w_4 - \frac{\mathrm{i} \varphi_{*}}{  \kappa (\beta_*)} w_3
-
 \mathrm{i}  e^{- \mathrm{i} \theta_{\beta_*}}         
 \frac{\theta_4  \varphi_{*}  }{\kappa(\beta_*)}  F_{2}^{\prime}(0,0) 
\int_{\Omega} \mathcal{S}(\cdot, y) \varphi_* (y) \d y \, w_1=0,
 $$ where $\theta_4 $ in \eqref{c4}.
Multiplying both sides of the above equation by $\varphi_*(x)$ and integrating over $\Omega$ yields
$
 \mathrm{i}  w_3 + \mathrm{i}  e^{- \mathrm{i} \theta_{\beta_*}}        \theta_2 w_1=0.
 $
That is, 
\begin{equation}\label{3.166}
\begin{aligned}
\left\{\begin{array}{l}
Re:    
\theta_2 \sin(\theta_{\beta_*})
   w_1 =0  , \\
Im: 
  w_3 + \theta_2  \cos(\theta_{\beta_*})
  w_1     =0.
\end{array}\right.
\end{aligned}
\end{equation}

As we know that, under  $ (\mathrm{A}_2) $, we have $\theta_2 <0$ and $\sin(\theta_{\beta_*}) > 0,$ which implies that $w_1=0.$ Consequently, it follows from \eqref{3.166} and \eqref{3.16} that
$w_3=w_4=0.$  Hence $T$ is injective.  
 From Proposition 2.2 (iv), we know that $\mathcal{L}^{\beta_{*}} $ is a Fredholm operator with index zero. Therefore $\mathscr{T}$ is also a Fredholm operator of index zero, and since it is injective, it must be bijective.

By the implicit function theorem, there exist a constant $\delta_2$ and a continuously differentiable mapping $\beta \longmapsto \left(\theta_{\beta}, \alpha_{\beta},   l_{\beta}, z_{\beta} \right) \in     [0,2\pi) \times \mathbb{R}^2 \times (\mathbb{X}_1)_{\mathbb{C}}$ such that 
$$
H(\theta_{\beta}, \alpha_{\beta},   l_{\beta}, z_{\beta}, \beta )=0
\ \ \text{for} \ \beta \in \mathcal{N}_{-}(\beta_*, \delta_2 ).$$
Hence, the existence is established, and the uniqueness can be deduced by invoking the convergence argument introduced in Theorem \ref{th3.2}. It then follows that
$(\theta_{\beta}, \alpha_{\beta},   l_{\beta}, z_{\beta} ) \rightarrow \left(\theta_{\beta_*}, 1, l_{\beta_*}, z_{\beta_*} \right)$ as $\beta \rightarrow \beta_*,$ where $\theta_{\beta_*}$ and $l_{\beta_*}$ in \eqref{theta12}. Consequently,
for each $ \beta \in \mathcal{N}_{-}(\beta_*, \delta_2 ),$
 there exists a pair of purely imaginary eigenvalue $i \omega_{\beta} =  \frac{i l_\beta}{\kappa(\beta_*)}\left(\beta-\beta_* \right) $ corresponding to
   a sequence of critical delay values $\tau=\tau_{k}=\frac{\theta_{\beta} + 2k \pi}{\omega_{\beta}}, k \in \mathbb{N}_0.$ 
\end{proof}


In the following we analyze the transversality condition. 
By employing arguments similar to that used in   Lemma 4.4 of \cite{css2016} and Theorem 2.11 of \cite{css2012JDE}, we can show that 
$$
S^*_{k}(\phi^*_\beta, \phi_\beta) = \int_{\Omega} \overline{\phi_\beta^*}(x) \phi_\beta (x )\mathrm{d} x + \tau_k   e^{-\mathrm{i} \theta_\beta}  \int_{\Omega} \int_{\Omega} \mathcal{S}( x, y) B(x) u_\beta(x) \overline{\phi_\beta^*}(x) \phi_\beta (y) \mathrm{d} y \mathrm{~d} x \neq 0,
$$
and $\lambda= \mathrm{i} \omega_\beta $ is   a simple eigenvalue of $\mathscr{A}_{\tau_k, \beta}$ for $ k \in \mathbb{N}_0.$  Parallelly, $ -\mathrm{i} \omega_\beta$ is a simple eigenvalue of the adjoint
operator $\mathscr{A}^*_{\tau_k, \beta}$
 with adjoint eigenvector $\phi_\beta^*\in \mathbb{X}_{\mathbb{C}} \backslash\{0\}$ (has the similar form to $\phi_\beta$)
defined by
 $  \Pi^*( \tau_k, \beta, \overline{\mathrm{i} \omega_{\beta}} ) \phi^*_{\beta} = 0.$

 By differentiating $\Pi(\tau, \beta, \lambda(\tau)) \phi(\tau)=0$ with respect to \(\tau\) at $\tau=\tau_k$, one immediately finds 
$$
  \begin{aligned}
  &\frac{\d \lambda(\tau_k)}{\d \tau}   \left(   - \phi_\beta (x )  - \tau_k   e^{-\mathrm{i} \theta_\beta}    B(x) u_\beta(x) \int_{\Omega} \mathcal{S}( x, y)  \phi_\beta (y) \mathrm{d} y \right) \\
& + -\mathrm{i} \omega_\beta   e^{-\mathrm{i} \theta_\beta} B(x)  u_{\beta}(x)  \int_{\Omega} \mathcal{S}(x, y ) \phi_{\beta}(y) \d y  +\Pi(\tau_k, \beta , \mathrm{i} \omega_\beta    ) \frac{d \phi(\tau_k)}{\d \tau}=0.
\end{aligned}
$$
 Subsequently, multiplying the above equation by $\overline{\phi_\beta^*}(x)$ and integrating over the domain $\Omega$, we have  
$$
  \begin{aligned}
  \frac{\d \lambda(\tau_k)}{\d \tau}  =&  \frac{-\mathrm{i} \omega_\beta     e^{-\mathrm{i} \theta_\beta}}{S^*_{k}(\phi^*_\beta, \phi_\beta)}        \int_{\Omega} \int_{\Omega} B(x)  u_{\beta}(x) \mathcal{S}(x, y) \overline{\phi_\beta^*}(x) \phi_{\beta}(y) \d y \d x\\
  =&  \frac{-\mathrm{i} \omega_\beta     e^{-\mathrm{i} \theta_{\beta}}}{ |S^*_{k}(\phi^*_\beta, \phi_\beta)|^2}        \int_{\Omega} \int_{\Omega} B(x)  u_{\beta}(x) \mathcal{S}(x, y) \overline{\phi_\beta^*}(x) \phi_{\beta}(y) \d y \d x  \int_{\Omega} \overline{\phi_\beta^*}(x) \phi_\beta (x )\mathrm{d} x   \\
  & - \frac{\tau_k \mathrm{i} \omega_\beta  }{ |S^*_{k}(\phi^*_\beta, \phi_\beta)|^2}       \left| \int_{\Omega} \int_{\Omega} B(x)  u_{\beta}(x) \mathcal{S}(x, y) \overline{\phi_\beta^*}(x) \phi_{\beta}(y) \d y \d x \right|^2.   \\
\end{aligned}
$$
From  the expressions of $\phi_\beta(x), \theta_{\beta}$, $\omega_{\beta}$,  we can see that
$$
\phi_\beta(x) \rightarrow \varphi_*(x), \,   \phi^*_\beta(x) \rightarrow \varphi_*(x),   \,   \frac{\omega_\beta}{\beta-\beta_*}  \rightarrow \frac{l_{\beta_*}}{\kappa(\beta_*)} \text { as } \beta \rightarrow \beta_*.
$$
Then, we have
$$
\begin{aligned}
 \lim _{\beta \rightarrow \beta_*} \frac{1}{\left(\beta-\beta_*\right)^2} \frac{\d  \lambda \left(\tau_k\right) }{\d \tau} &=  \frac{-l_{\beta_*}  e^{-\mathrm{i} \theta_{\beta_*}}}{ \hat{S} \kappa(\beta_*)} \int_{\Omega} \int_{\Omega} F_{2}^{\prime}(0,0)  \frac{\theta_4 }{\kappa(\beta_*)} \mathcal{S}(x, y) \varphi_*^2(x) \varphi_*(y) \d y \d x  \int_{\Omega} \overline{\phi_\beta^*}(x) \phi_\beta (x )\mathrm{d} x \\
&  =\frac{-l_{\beta_*}  e^{-\mathrm{i} \theta_{\beta_*}} \theta_4^2 \theta_2}{\hat{S} \kappa^2(\beta_*)}, \, \text{ where } \hat{S}=\lim _{\beta \rightarrow \beta_*}\left|S^*_{k}(\phi^*_\beta, \phi_\beta)\right|^2 >0 \text{ is real. }
\end{aligned}
$$ 
 With condition $ (\mathrm{A}_2) $, we know that $\theta_2<0$ and $\sin \theta_{\beta_*} >0.$ Hence, taking the real part yields
$$ \lim _{\beta \rightarrow \beta_*} \frac{1}{\left(\beta-\beta_*\right)^2} \frac{\d Re( \lambda \left(\tau_k\right) )}{\d \tau}= \frac{-l_{\beta_*}  \sin\theta_{\beta_*} \theta_4^2 \theta_2}{\hat{S} \kappa^2(\beta_*)}>0.$$
Up to now, the transversality condition
     \begin{equation*}\label{retau}
          \frac{\d Re(\lambda(\tau_k))}{\d \tau}  > 0, \text{ for } k \in \mathbb{N}_0,
     \end{equation*} 
     is confirmed.
Consequently, we have the following main theorem.

\begin{theorem}\label{th4.8} 
Let $u_\beta(x)$ be the positive spatially  nonhomogeneous steady-state solution of   \eqref{1.1} formed by \eqref{ux}.    Assume that   the condition $ (\mathrm{A}_2) $ with $\beta \in \mathcal{N}_{-}(\beta_*, \delta_2)$ holds,   then   there exists a sequence $\left \{\tau_k\right\}_{k=0}^{\infty}$ such that a
 Hopf bifurcation occurs at $u_\beta(x)$ while $\tau=\tau_k, k \in \mathbb{N}_0$.
\end{theorem}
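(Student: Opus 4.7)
The plan is to apply the standard Hopf bifurcation theorem for semilinear retarded reaction-diffusion equations in the abstract framework of Wu's monograph \cite{wu1996}, taking $\tau$ as the bifurcation parameter and treating $\beta \in \mathcal{N}_{-}(\beta_*, \delta_2)$ as a fixed auxiliary parameter. The three hypotheses that must be verified are: (i) existence of a simple pair of purely imaginary eigenvalues $\pm \mathrm{i}\omega_\beta$ of $\mathscr{A}_{\tau_k,\beta}$ at the critical delay $\tau_k$; (ii) a non-resonance condition ensuring that no other integer multiple $\mathrm{i} m\omega_\beta$, $m\in\mathbb{Z}\setminus\{\pm 1\}$, belongs to the spectrum $\sigma(\mathscr{A}_{\tau_k,\beta})$; and (iii) the transversality condition $\frac{\d\,\mathrm{Re}(\lambda(\tau_k))}{\d\tau}\neq 0$.

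First, I would invoke Theorem~\ref{th4.3} to produce the purely imaginary eigenvalues $\pm\mathrm{i}\omega_\beta$ at each $\tau_k = (\theta_\beta+2k\pi)/\omega_\beta$, together with the associated eigenfunction $\phi_\beta = \alpha_\beta \varphi_* + (\beta-\beta_*) z_\beta$. Simplicity of $\mathrm{i}\omega_\beta$ as an eigenvalue of $\mathscr{A}_{\tau_k,\beta}$ then follows from the identity $S_{k}^{*}(\phi_\beta^{*},\phi_\beta)\neq 0$ stated just before the theorem, which parallels Lemma 4.4 of \cite{css2016} and Theorem 2.11 of \cite{css2012JDE}: indeed, $S_k^*$ is exactly the pairing obtained by differentiating $\Pi(\tau_k,\beta,\lambda)\phi$ with respect to $\lambda$ and pairing against the adjoint eigenvector $\phi_\beta^*$, so its non-vanishing rules out a nontrivial generalized eigenvector.

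Next, for non-resonance, I would repeat the Lyapunov--Schmidt/perturbation argument that drove Theorem~\ref{th4.3}: assume that $\mathrm{i} m\omega_\beta \in \sigma(\mathscr{A}_{\tau_k,\beta})$ for some integer $m\neq\pm 1$, substitute $\omega = m\omega_\beta$ and the expansion for $u_\beta$ into \eqref{lambdaiw}, and project onto $\varphi_*$ to obtain a scalar system analogous to \eqref{ri} with $\theta$ replaced by $m\theta_\beta + 2\pi k'$ for some $k'$. Under $(\mathrm{A}_2)$ the defining pair $(\theta_{\beta_*},l_{\beta_*})$ in \eqref{theta12} is the unique solution of \eqref{ri} in $(0,\pi)\times\mathbb{R}^{+}$ because $\theta_2<0$ forces $\sin\theta>0$ and the cosine equation then has a unique root on that interval; contradiction follows for $|m|\neq 1$, possibly after shrinking $\delta_2$ so that $\omega_\beta$ is small enough that no spurious higher-mode crossings occur. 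The transversality condition is already established in the calculation directly preceding the theorem, which yields
\[
\lim_{\beta\to\beta_*} \frac{1}{(\beta-\beta_*)^{2}}\,\frac{\d\,\mathrm{Re}(\lambda(\tau_k))}{\d\tau}
\;=\; \frac{-l_{\beta_*}\sin\theta_{\beta_*}\,\theta_4^{2}\,\theta_2}{\hat S\,\kappa^{2}(\beta_*)} \;>\; 0
\]
under $(\mathrm{A}_2)$; shrinking $\delta_2$ once more guarantees the sign for every $\beta\in\mathcal{N}_{-}(\beta_*,\delta_2)$ and every $k\in\mathbb{N}_0$. With (i)--(iii) in place, the Hopf bifurcation theorem applied to $\mathscr{A}_{\tau,\beta}$ at $\tau=\tau_k$ delivers the desired family of periodic orbits bifurcating from $u_\beta(x)$.

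I expect the main obstacle to be the non-resonance verification, since nothing in the preceding analysis directly rules out a coincidence of a higher harmonic $\mathrm{i} m\omega_\beta$ with another branch of the spectrum; the projection argument above isolates this issue but its execution requires some care because $\omega_\beta\to 0$ as $\beta\to\beta_*$, so the harmonics $m\omega_\beta$ also collapse, and the bookkeeping of the perturbative expansion must be carried out carefully to confirm that the scalar system inherited from \eqref{reim} admits no spurious root $(\theta,l)=(m\theta_{\beta_*}\!\!\mod 2\pi,\,m l_{\beta_*})$ with $|m|\neq 1$. Everything else is a direct packaging of Theorem~\ref{th4.3} and the transversality computation into the hypotheses of Wu's Hopf theorem.
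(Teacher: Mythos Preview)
Your proposal is correct and follows essentially the same route as the paper, which assembles Theorem~\ref{th4.8} directly from Theorem~\ref{th4.3} (existence of the purely imaginary pair at $\tau=\tau_k$), the simplicity statement $S_k^{*}(\phi_\beta^{*},\phi_\beta)\neq 0$, and the transversality computation immediately preceding the theorem. The non-resonance issue you flag as the main obstacle is in fact already disposed of by the ``if and only if'' clause in Theorem~\ref{th4.3}, which says that $\mathrm{i}\omega$ with $\omega>0$ lies in $\sigma(\mathscr{A}_{\tau,\beta})$ precisely when $\omega=\omega_\beta$ and $\tau=\tau_k$; hence no $\mathrm{i}m\omega_\beta$ with $|m|\neq 1$ can be an eigenvalue at $\tau_k$, and no separate perturbation argument is needed.
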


\begin{remark}\label{remark4.7}
 By similar analysis,
if $   \kappa(\beta_*)>0 \text { and } \tilde{\kappa}( \beta_* )<0, $ 
we can show the  Hopf bifurcation  occurs at $u_\beta(x)$ with $ \beta > \beta_*.$
\end{remark}
 
 The local stability and Hopf bifurcation of the positive steady state $u_\beta(x)$, as described in 
 Theorems  \ref{th3.2},  \ref{th4.8},  and Remarks \ref{remark3.3}, \ref{remark4.7}, 
 can be visualized in the  $(\theta_1, \theta_2)$ plane.

\begin{figure}[h] \centering \begin{minipage}{0.49\linewidth} \centerline{\includegraphics[width=7.5cm]{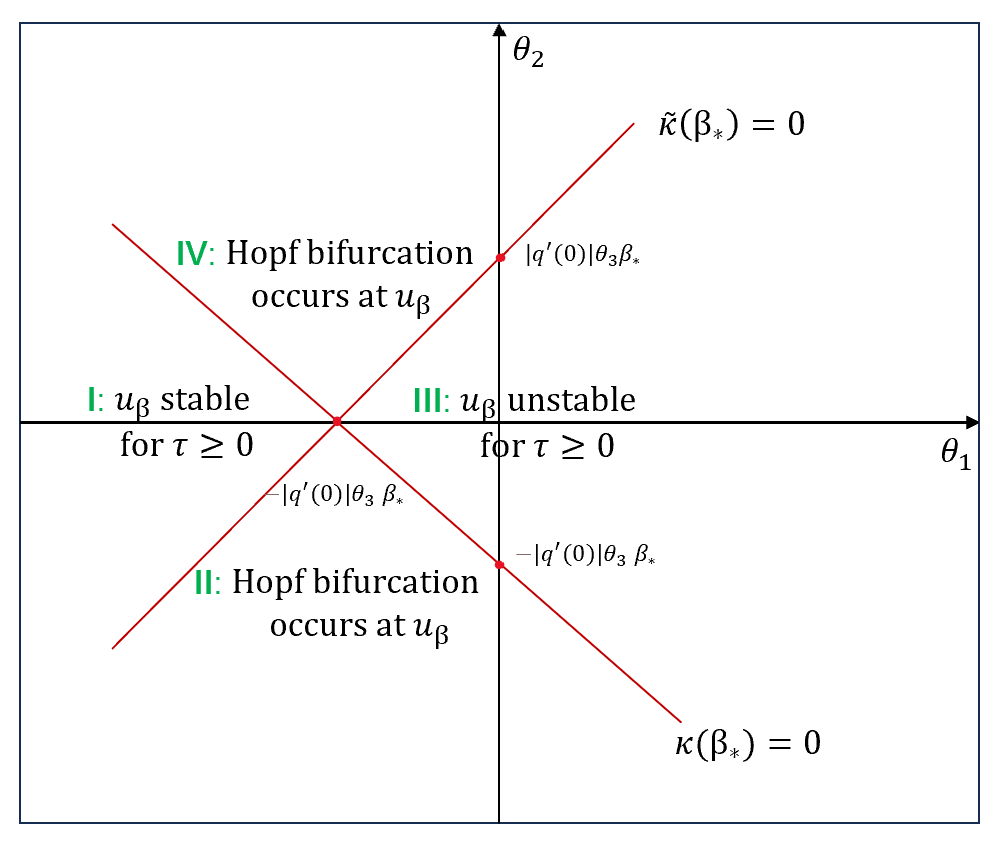}}	\end{minipage}	\captionsetup{margin=30pt}		\caption{ The local dynamics of $u_\beta(x)$. In region I, the bifurcated positive steady state $u_\beta$ is stable for $\tau \geq 0$. In region II and IV, Hopf bifurcation occurs at $u_\beta(x)$. In region III, $u_\beta(x)$ is always unstable for $\tau \geq 0$. }        \label{fig1} \end{figure}

\section{Example}

In this section, we present a numerical example to validate our theoretical findings  (Theorem \ref{th3.2}
and Theorem \ref{th4.8}) and to investigate the effects of treatment on the local asymptotic stability of positive steady-state solution, as well as the occurrence of Hopf bifurcation.

For the nonlinear functional form $F( \cdot , \cdot )$ in system \eqref{1.1}, we adopt the formulation introduced by Britton \cite{Britton1990}, namely
 $$
F( \cdot , \cdot )=1+a_1 u(x, t)-a_2 u^2(x, t)-(1+a_1-a_2) \int_{0}^{\pi} \mathcal{S}(x, y) u(y, t-\tau) \d y.
$$
To assess the impact of radiotherapy and chemotherapy on tumor cell dynamics, we perform numerical simulations to examine the asymptotic steady states of tumor cells across varying treatment intensity $\beta$.
In this setting, $\beta$ is assumed to increase monotonically with the treatment dose, implying that higher doses correspond to larger values of $\beta$.
Following the characterization in \cite{m2018}, 
the treatment-induced tumor cell death rate is modeled as
$$
 q(u)=(1-\frac{u}{u_{max}}),  \quad \beta= 1- e^{-\alpha_1 Dose- \alpha_2 Dose^2},
$$
where $e^{-\alpha_1 Dose- \alpha_2 Dose^2}$ denotes the surviving fraction, and $\alpha_1$ and $\alpha_2$ are radiosensitivity parameters. Here $Dose$ represents the delivered radiation dose. We impose 
Dirichlet boundary conditions on the special domain $\Omega=[0, \pi]$. For simplicity, we take
$$
   r(x)=r
 \text{ and }
 \mathcal{S}(x, y)= \sin x \sin y.
$$
Under these specifications, the system \eqref{1.1} can be rewritten as,
\begin{equation*}\label{sm5.1}
\left\{\begin{aligned}     &\frac{\partial u(x, t)}{\partial t} = d \Delta u(x,t)+   u(x, t) 
\left(
1+a_1 u(x, t)-a_2 u^2(x, t)-(1+a_1-a_2) \int_{0}^{\pi} \sin x \sin y  \, u(y, t-\tau) \d y
\right)\\
& \quad \quad \quad \quad \quad - (1- e^{-\alpha_1 Dose- \alpha_2 Dose^2})(1-\frac{u(x,t)}{u_{max}}) u(x,t) -r u(x,t),      \quad x \in (0, \pi), \,  t>0,\\     & u(0, t) =u(\pi,t)=0, \,  t>0.     \end{aligned}\right. \end{equation*} 
A straightforward calculation yields $\beta_*= -d+ 1 -r$, $\varphi_*(x)=\sin x$,  and 
$$
\theta_1= \frac{4}{3} a_1, \ \theta_2= \frac{2 \pi}{3} (a_2-a_1-1), \ \theta_3= \frac{4}{3}, \ \theta_4= \frac{\pi}{2}$$
in \eqref{c1c2} and \eqref{c4}, and $q^{\prime}(0)= -1/u_{max}$.

In the following simulation, we fix the parameter values as 
$$d=0.1, \ \alpha_1=0.2, \ \alpha_2=0.3,  \ u_{max}=1.$$

\subsection{Effect of treatment parameter $\beta$ on   the positive steady state $u_{\beta}(x)$}

We first consider the parameter set
$$ \ (I): \qquad
r=0.5, \ 
a_1=-0.49, \ a_2=0.5,  \ \tau=0.1.$$
In this case, the critical value of the treatment parameter is
$\beta_* = -d + 1-r=0.4 $.
As we know that, 
$$\kappa(\beta) = \theta_1 + \theta_2 - q'(0)\theta_3 \beta, \quad
\kappa(\beta_*) = \theta_1 - \theta_2 - q'(0)\theta_3 \beta_*.$$
According to Theorem \ref{ubx}, the positive steady state  $u_\beta(x)$ exists whenever $\kappa(\beta) \neq 0$. 
Fig.~\ref{kappavsbeta} (Right) illustrates the relationship between $\beta$ and $\kappa(\beta)$. 
Thus, for the chosen parameter set, it follows that $u_\beta(x)$ exists for 
$ 0< \beta < \beta_* $.

\begin{figure}[h]
	\centering
	\begin{minipage}{0.49\linewidth}
\centerline{\includegraphics[width=7.5cm]{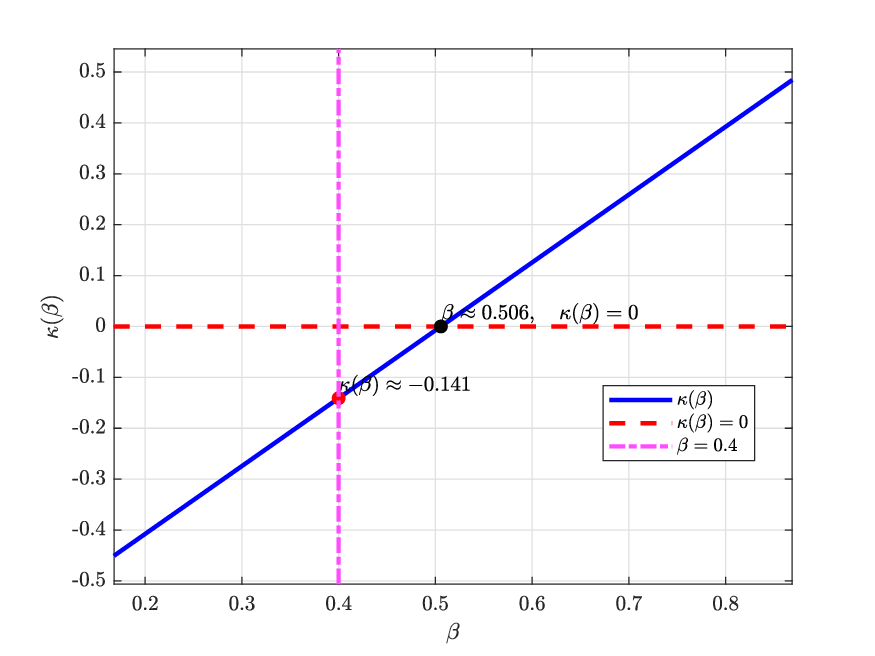}}
	\end{minipage}
	\hfill
	\begin{minipage}{0.49\linewidth}
\centerline{\includegraphics[width=7.5cm]{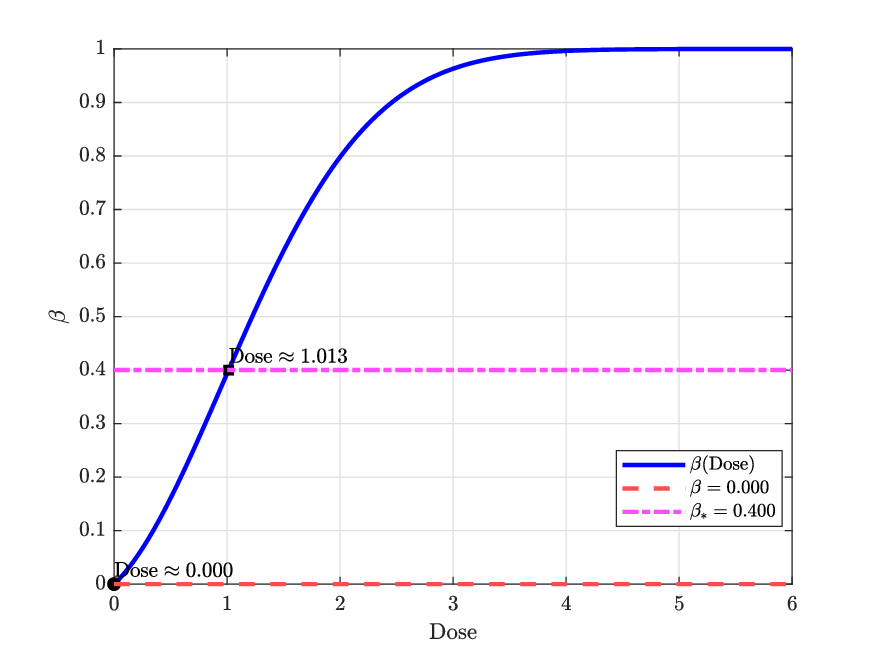}}
	\end{minipage}
	\hfill
	\captionsetup{margin=30pt}
		\caption{Left: $\kappa(\beta) $ vs. $\beta$ ; \qquad Right: Treatment rate $\beta$ vs. $Dose$.}
        \label{kappavsbeta}
\end{figure}
Since the treatment rate 
$\beta$ is assumed to increase monotonically with the clinical dose, following
$\beta = 1- e^{-\alpha_1 Dose- \alpha_2 Dose^2}$, we observe that when $\beta \in (0,0.4)$, the corresponding dose lies whthin the interval $(0,1.013)$, see Fig.~2 (Left).

When $ Dose=0.5$,  we have $\beta=0.161< \beta_*=0.4,$ and 
$$\kappa(\beta) = -0.46 \neq 0, \ 
 \kappa(\beta_*) = -0.141<0, \ \tilde{\kappa}(\beta_*) = -0.099<0.$$ 
These satisfy condition $(A_1)$ in Theorem \ref{th3.2}. 
Therefore, the theoretical analysis predicts that the spatially nonhomogeneous steady-state solution $u_{\beta}(x)$ is locally asymptotically stable.
This prediction is in excellent agreement with the numerical simulation results, as illustrated in Fig.~\ref{f2} (a).

If the dose is increased from $0.5$ to $1$, the corresponding 
treatment rate rises to $\beta=
0.393 < \beta_*=0.4.$ 
In this case, we have
$$\kappa(\beta) =
-0.15 \neq 0,
 \qquad \kappa(\beta_*) = -0.141<0, \qquad  \tilde{\kappa}(\beta_*) = -0.099<0.$$ 
Hence, condition  $ (\mathrm{A}_1) $ remains satisfied, ensuring the local stability of the spatially nonhomogeneous steady-state solution $u_{\beta}(x)$, as shown in Fig.~\ref{f2} (b).
By comparing (a) and (b) in Fig.~\ref{f2}, we observe that as the treatment rate $\beta$ (or equivalently, the dose) increases, the peak amplitude of the stabilized solution decreases.
This result suggests that, in clinical practice, tumor growth can be effectively suppressed by appropriately adjusting the therapy dose.

 \begin{figure}[h]
  \centering
  \captionsetup[sub]{labelformat=simple, labelfont=bf}
  \renewcommand\thesubfigure{(\alph{subfigure})}
  \begin{subfigure}[b]{0.32\textwidth}
    \centering
    \includegraphics[width=\linewidth]{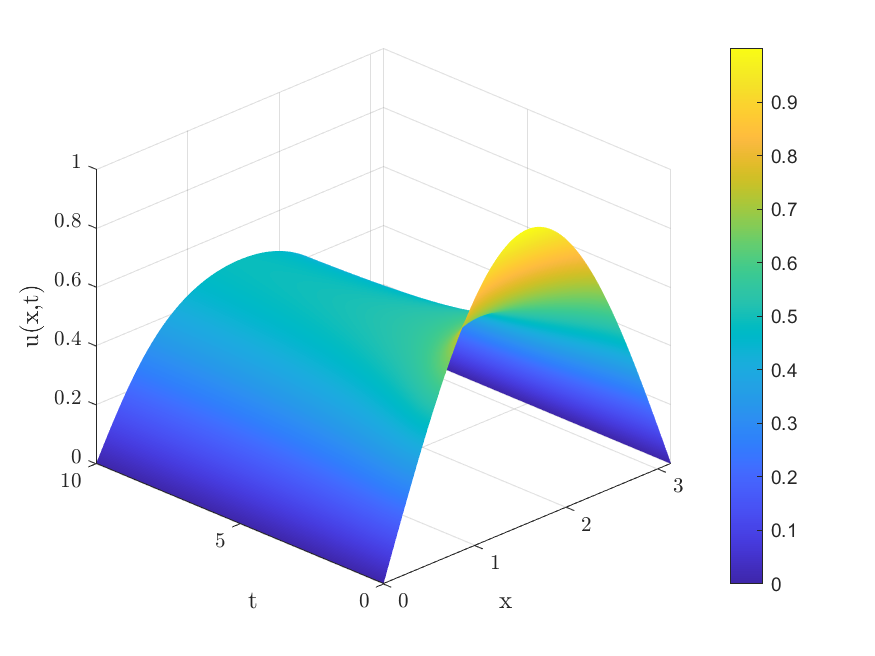}
    \caption{$\mathrm{Dose}=0.5,\ \tau=0.1$}
    \label{fig:dose05}
  \end{subfigure}
  \hfill
  \begin{subfigure}[b]{0.32\textwidth}
    \centering
    \includegraphics[width=\linewidth]{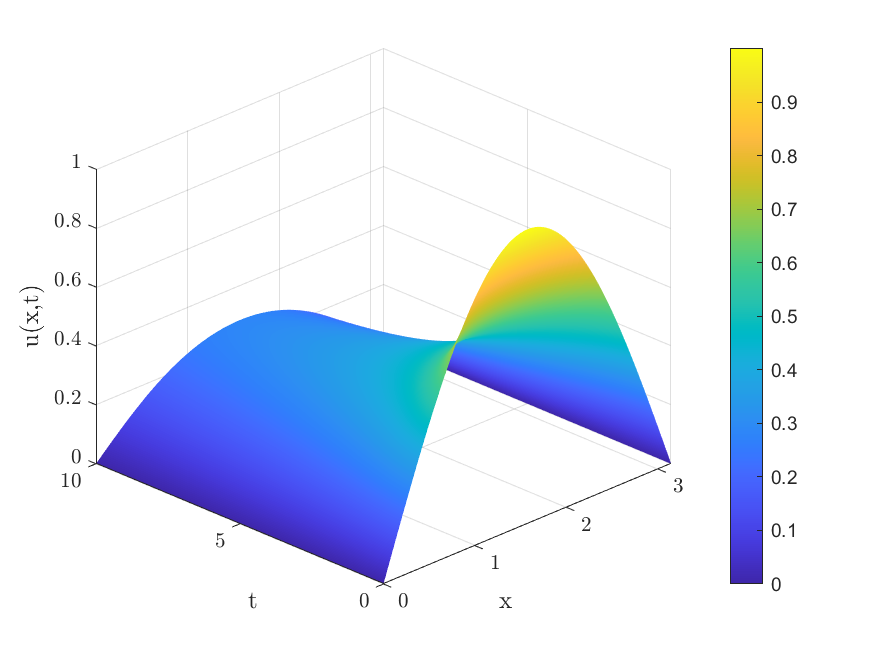}
    \caption{$\mathrm{Dose}=1,\ \tau=0.1$}
    \label{fig:dose1_tau01}
  \end{subfigure}
  \hfill
  \begin{subfigure}[b]{0.32\textwidth}
    \centering
    \includegraphics[width=\linewidth]{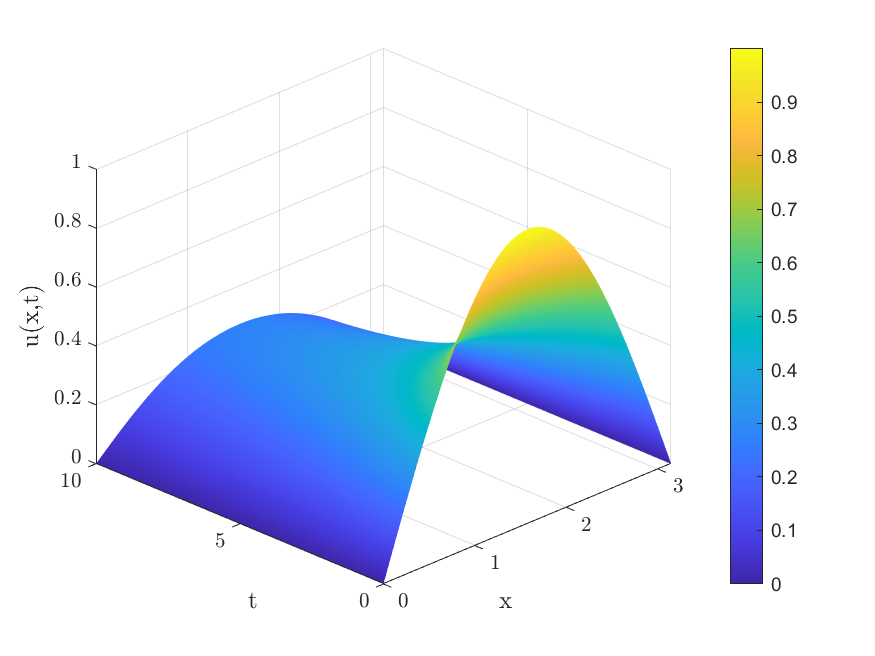}
    \caption{$\mathrm{Dose}=1,\ \tau=10$}
    \label{fig:dose1_tau10}
  \end{subfigure}
  \captionsetup{margin=30pt}
  \caption{Stability of the positive spatially nonhomogeneous steady state $u_{\beta}(x)$ under condition $(\mathrm{A}_1)$ with varied values of $Dose$ and delay. 
  }
  \label{f2}
\end{figure}

Even when the delay $\tau$ is increased from 0.1 to a large value, for example $\tau=10$, while keeping all other parameters the same as those in Fig.~\ref{f2} (b), the positive steady-state solution $u_{\beta}(x)$ remains locally asymptotically stable, as shown in Fig.~\ref{f2} (c). 
This observation confirms that,
under the conditions specified in Theorem \ref{th3.2}, such stability persists for all $\tau \geq 0.$

\subsection{Effect of treatment parameter $\beta$ on   Hopf Bifurcation}

In this subsection, we investigate the occurrence of Hopf bifurcation near the positive steady-state solution $u_{\beta}(x)$ with respect to the treatment rate $\beta$. 
Similar to the previous subsection, 
we first determine the admissible ranges of the treatment parameter 
$\beta$ and the $Dose$. 
We take the parameters set 
$$ (II): \qquad   r=0.1, \ 
 a_1=2, \ a_2=0.9, \  \tau=1.75.$$
For this choice of parameters, we obtain the critical value $\beta_*=0.8,$
and it follows that $\kappa(\beta)<0,$ for all $\beta< \beta_*.$
Accordingly, 
the admissible range of the treatment parameter
$\beta$ is $\beta \in (0, 0.8=\beta_*),$ while the corresponding
admissible range of the $Dose$ is approximately $(0,2.007)$.
 
When $Dose=0.4,$
under the parameter set (II),
we have $ \beta=0.12 < \beta_*=0.8$ and 
 $$\kappa(\beta)=  -1.571 \neq 0 , \  \kappa(\beta_*)=  -0.665<0,  \ \tilde{\kappa}(\beta_*)=8.132>0, $$ 
which satisfy condition $(\mathrm{A}_2)$ in Theorem \ref{th4.8}. Under these conditions, a periodic solution of $u(x,t)$ emerges via Hopf bifurcation at 
 $u_{\beta}(x)$, as illustrated in Fig. \ref{fig42}.
  To clearly display the oscillatory behavior, the right panel presents a 
cross-sectional view of $u(x,t)$ over the interval $t \in [0,100]$.

\begin{figure}[h]
	\centering
	\begin{minipage}{0.49\linewidth}
\centerline{\includegraphics[width=7.5cm]{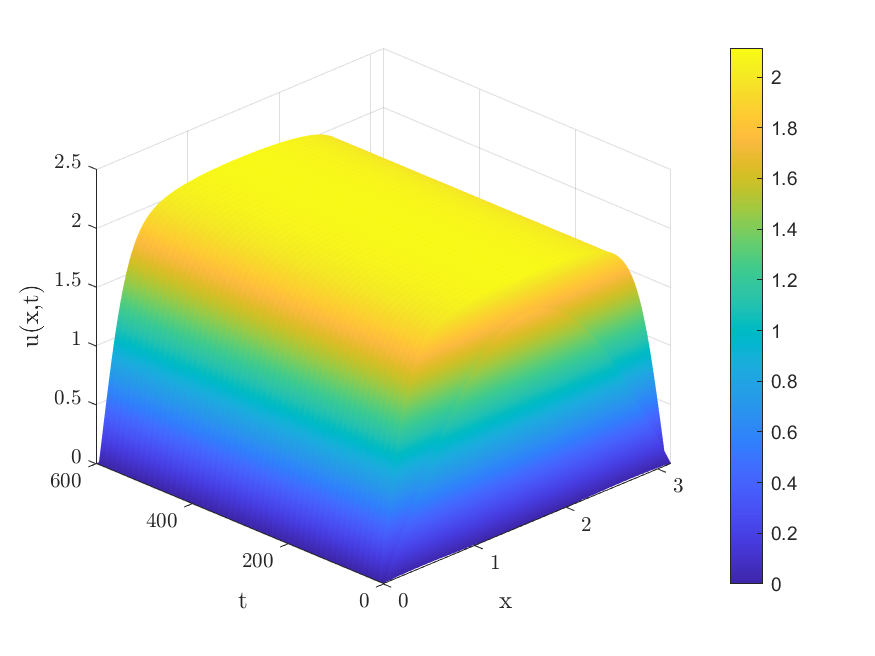}}
	\end{minipage}
	\hfill
	\begin{minipage}{0.49\linewidth}
\centerline{\includegraphics[width=7.5cm]{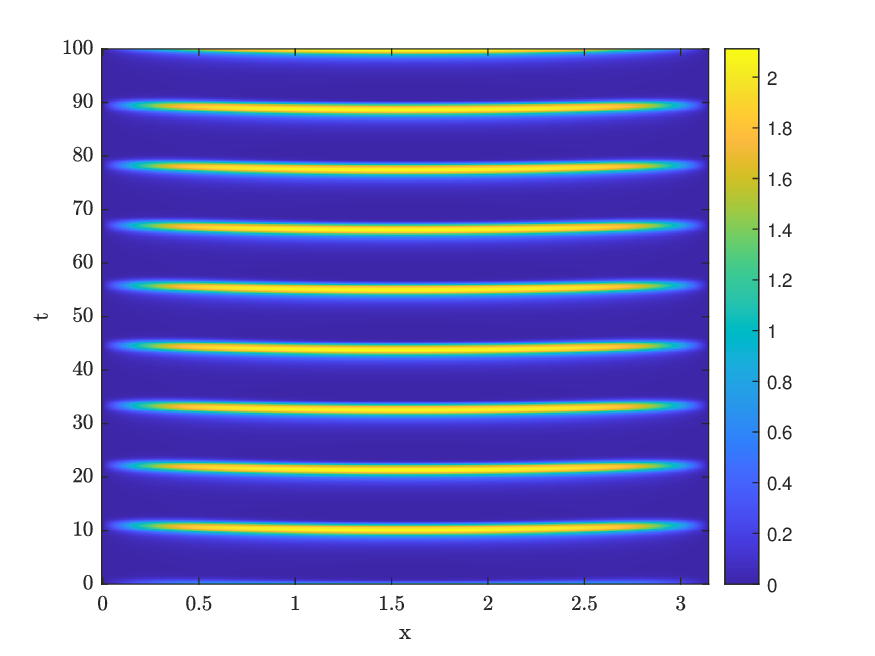}}
	\end{minipage}
	\hfill
	\captionsetup{margin=30pt}
		\caption{ 
        Occurrence of Hopf bifurcation when $ (\mathrm{A}_2) $ holds.
Left: Bifurcated periodic solution; Right: The cross-sectional
view of the solution.}
    \label{fig42}
\end{figure}

Next
 we increase the treatment parameter $\beta$  by raising  the delivered radiation therapy $Dose$ from $0.4$ to $1.2$, while keeping all other parameters unchanged. In this case, we have 
$$\beta_*=0.8,  
 \ \beta= 0.489 < \beta_*, \ 
\kappa(\beta) =  -1.079 \neq 0, \ 
 \kappa(\beta_*) = 
-0.665<0, \ \tilde{\kappa}(\beta_*)  = 8.132>0.$$  
These values satisfy condition  $ (\mathrm{A}_2) $ in Theorem \ref{th4.8}, implying that a Hopf bifurcation occurs at  $u_\beta(x)$. The resulting periodic solution is shown in Fig. \ref{fig5}.

\begin{figure}[h]
	\centering
	\begin{minipage}{0.49\linewidth}
\centerline{\includegraphics[width=7.5cm]{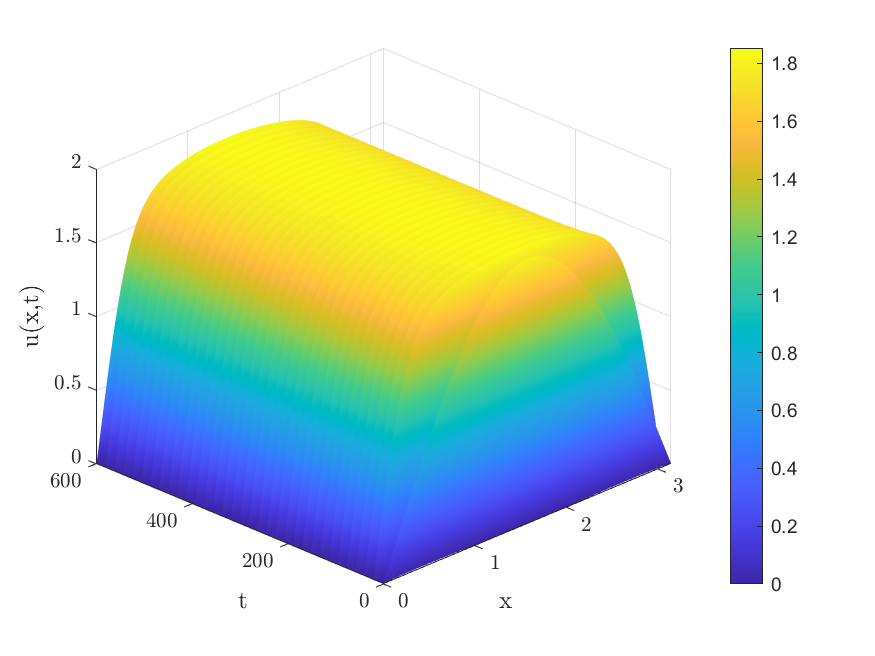}}
	\end{minipage}
	\hfill
	\begin{minipage}{0.49\linewidth}
\centerline{\includegraphics[width=7.5cm]{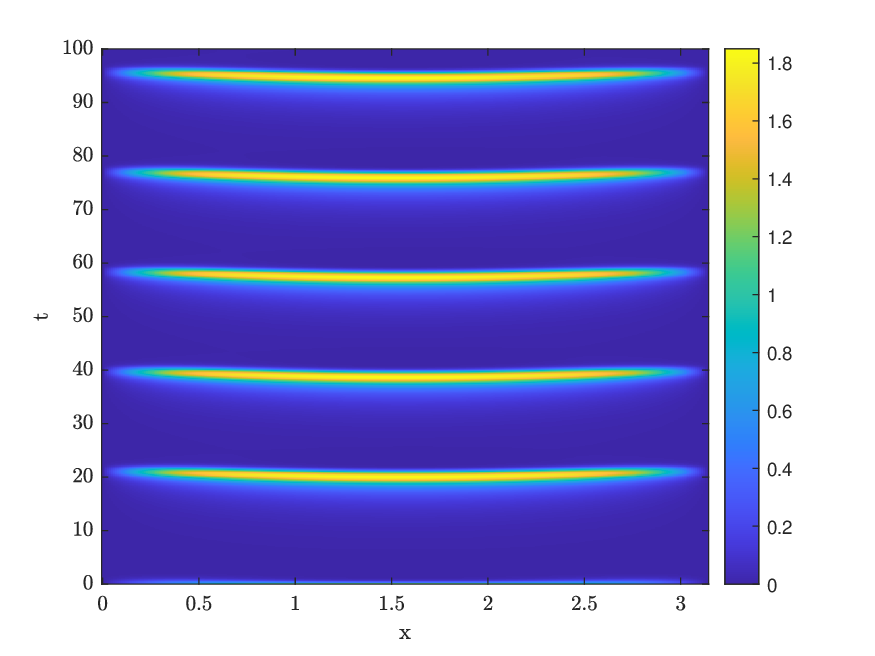}}
	\end{minipage}
	\hfill
	\captionsetup{margin=30pt}
		\caption{Occurrence of Hopf bifurcation when condition $(\mathrm{A}_2)$ is satisfied.
Left: Periodic solution arising from the Hopf bifurcation.
Right: Cross-sectional view of the solution with respect to $x$ and $t$
}
\label{fig5}
\end{figure}

By comparing the cross-sectional views on the right panels of Fig.~\ref{fig42} and Fig.~\ref{fig5}, one can clearly observe that as $Dose$ increases from $0.4$ to $1.2$, the spacing between the density peaks of tumor cells expands from approximately 10 to 20, while the amplitude of the periodic solution decreases from above 2 to below 2.
This reduction in peak amplitude, accompanied by an increase in oscillation period, suggests that under the present conditions, a moderate increase in drug dosage may lead to more stable and controlled tumor growth dynamics, thereby potentially improving the patient's overall quality of life.

\section{Conclusion}

In this paper, we have investigated a nonlocal, delayed reaction-diffusion tumor model that incorporates therapeutic effects under Dirichlet and Neumann boundary conditions.
By applying the Lyapunov-Schmidt reduction method, we established the existence of nontrivial steady-state solutions bifurcating from the trivial steady state. In the special case where $\kappa(\beta) \neq 0$, we further derived an approximate expression for the spatially nonhomogeneous positive steady state $u_{\beta}(x)$, providing analytical insight into the spatial structure of tumor cell distributions.

We then examined the stability of the positive steady state and explored the emergence of Hopf bifurcation within the system \eqref{1.1}.
Specifically, under condition $(\mathrm{A}_1)$ in Theorem~\ref{th3.2}, when the treatment parameter $ \beta \in \mathcal{N}_{-}(\beta_*, \delta_1 )$, the positive steady state $u_{\beta}(x)$ is locally asymptotically stable for all $\tau \geq 0$.
In contrast, under condition $(\mathrm{A}_2)$ in Theorem~\ref{th4.8}, when $\beta \in \mathcal{N}_{-}(\beta_*, \delta_2)$, the system undergoes a Hopf bifurcation at $u_\beta(x)$ as $\tau$ passes through critical values $\tau_k$, $k \in \mathbb{N}_0$, giving rise to periodic oscillations in tumor cell density.

Through illustrative numerical examples, we analyzed how the treatment parameter $\beta$ and the delivered radiation therapy dose jointly influence the steady-state and bifurcation behavior. Under the hypotheses of Theorem~\ref{th3.2}, increasing the $Dose$ leads to a decrease in the steady-state tumor cell density, indicating enhanced treatment efficacy (see Fig.~\ref{f2}). Furthermore, the comparative analysis of cross-sectional profiles revealed that moderate increases in $\beta$ (or equivalently, treatment intensity) tend to reduce the amplitude of oscillatory tumor dynamics while extending their temporal period (see Fig.~\ref{fig42} and Fig.~\ref{fig5}). This suggests that controlled adjustments in therapeutic dosage can stabilize tumor growth patterns, potentially delaying recurrence and improving treatment outcomes.

The theoretical framework developed here provides a foundation for integrating spatial and temporal factors into treatment planning. It can inform future studies on personalized, adaptive therapy strategies that exploit nonlinear dynamical features of tumor evolution. The techniques are sufficiently general
to apply to other nonlocal delayed reaction-diffusion systems beyond the biological
example.
Future research may extend this model by incorporating heterogeneous tissue structures, immune response mechanisms, or multi-drug interactions. Such extensions would further bridge the gap between mathematical modeling and clinical application, enhancing our understanding of spatiotemporal tumor dynamics under complex therapeutic interventions.

\section*{Acknowledgements}
The authors gratefully acknowledge professors Shanshan Chen and  Jie Liu for their invaluable guidance during the preparation of this manuscript.

\section*{Funding}  DH is funded by the China Scholarship Council (CSC) 202306410009. YY and DH are funded by the Natural Sciences and Engineering Research Council of Canada (NSERC) 257109.

\phantomsection
\addcontentsline{toc}{section}{\refname}

\end{document}